\documentclass[final,onefignum,onetabnum]{siamart250211} 

\usepackage{amsmath,amssymb,amsfonts}
\usepackage{graphicx,subcaption}
\usepackage{hyperref}
\usepackage{mathrsfs}
\usepackage{bm}
\usepackage{xfrac}
\usepackage{color}
\usepackage{enumerate}
\usepackage{stmaryrd}

\usepackage{lipsum}
\usepackage{amsfonts}
\usepackage{graphicx}
\usepackage{epstopdf}
\usepackage{algorithmic}
\ifpdf
  \DeclareGraphicsExtensions{.eps,.pdf,.png,.jpg}
\else
  \DeclareGraphicsExtensions{.eps}
\fi

\newsiamremark{remark}{Remark}
\newsiamremark{hypothesis}{Hypothesis}
\crefname{hypothesis}{Hypothesis}{Hypotheses}
\newsiamthm{claim}{Claim}
\newsiamremark{fact}{Fact}
\crefname{fact}{Fact}{Facts}

\author{
Ehsan Faghihifar\thanks{Independent Researcher, Tehran, Iran 
(\email{eh.faghihi@gmail.com}).}
}

\title{Boundedness of Left Half-Plane Eigenvalues for Coefficient-Coupled Sturm--Liouville Problems with Application to Fourier Modal Methods}

\begin{document}
\maketitle

\begin{abstract}
We study a class of Sturm--Liouville problems of the form
\[
-(p\,y')' + q\,y = \lambda\, w\, y,
\]
on a finite interval with complex-valued coefficients, where $p$ and $w$ are piecewise smooth, the ratio $w/p$ is real and positive, and $q$ is bounded. We prove that all eigenvalues in the open left half-plane are contained in a bounded set, which implies that only finitely many eigenvalues lie in this region. This stands in contrast to the known unboundedness, for real-valued coefficients, when $p$ or $w$ changes sign independently. A canonical instance of this class, with $w=p$, arises in transverse-magnetic (TM) diffraction by metallic lamellar gratings, a benchmark problem in computational photonics, central to the development of modal methods.
In Fourier modal methods, in particular, the emergence of spurious modes with unbounded propagation constants (eigenvalues), rooted in discretization of sign-changing coefficients, leads to notorious convergence difficulties. These modes cannot be excluded \textit{a priori}, since genuine eigenvalues are not constrained by conventional bounds in this regime. Nevertheless, our result shows that the physical eigenvalues remain bounded, providing a rigorous criterion for identifying spurious modes in low-loss metallic gratings.
\end{abstract}

\begin{keywords}
indefinite Sturm--Liouville problems, coupled coefficients, left half-plane eigenvalues, Fourier modal methods, lamellar gratings, spurious modes
\end{keywords}

\section{Introduction}
\label{sec:intro}

The analysis of wave propagation and diffraction in periodic or layered media is a central problem in modern photonics. Lamellar gratings, as one-dimensional periodic structures, are particularly significant, both as canonical numerical models and key photonic components~\cite{Petit2013Book, Bonod2015, Popov2014Book}. 
Among various numerical techniques devised to address this problem, modal methods are especially prominent, owing to their physical insight and computational efficiency~\cite{Wexler1967, Snyder1983Book, Li1997, Kocabas2009}. These methods rely on expressing the electromagnetic fields as a superposition of eigenmodes of a \emph{Sturm--Liouville} (SL) differential operator arising from Maxwell’s equations, whose spectral properties, such as selfadjointness, completeness, eigenvalue distribution, and boundedness, form the mathematical foundation of the framework~\cite{Zettl2005Book, Hanson2013Book}.

Early applications of modal methods in grating analysis relied on Fourier series to describe permittivity functions and electromagnetic fields~\cite{Burckhardt1966, Kasper1973, Knop1978}. Later, the analytic modal method (AMM) introduced a new approach for gratings with piecewise constant permittivity profiles, determining layer eigenmodes in closed form and enforcing the interface conditions analytically~\cite{Botten1981a, Botten1981b, Botten1981c, Botten1985, Sheng1982, Li1993b}. While conceptually straightforward, AMM is restricted to simple geometries and requires solving transcendental characteristic equations in the complex plane. Subsequently, further numerical modal methods were developed. Most notably, inspired by the early Fourier modal implementations and an equivalent approximate method, called the coupled-wave method~\cite{Kogelnik1969, Magnusson1978}, a more rigorous, generalized, standard Fourier-based modal approach took shape, known as the rigorous coupled-wave analysis (RCWA) or the Fourier modal method (FMM)~\cite{Moharam1986, Moharam1982, Moharam1983, Moharam1995a, Li1997, Li1999, Li2001Chap}. We use the latter term throughout for the full range of Fourier-based modal formulations. To date, FMM remains a popular approach, owing to its physical insight, exceptional versatility, and computational efficiency.

Nonetheless, modal methods and particularly FMM, exhibit well-known far-field convergence difficulties for transverse-magnetic (TM) polarization in metallic gratings, in stark contrast to the well-behaved transverse-electric (TE) case~\cite{Li1993a}. Although the advent of Fourier factorization rules~\cite{Garnet1996, Lalanne1996, Li1996a} helped circumvent truncated convolution errors (TCE)~\cite{Faghihifar2022}, anomalous convergence persisted with low-loss or lossless metallic gratings. Depending on implementation, these numerical anomalies have been attributed to a variety of seemingly distinct but cognate sources, including the Gibbs phenomenon~\cite{Li1993a, Kim2012}, TCE~\cite{Li1996a, Lalanne1996}, ill-conditioning of Fourier coefficient matrices~\cite{Popov2004a}, violation of Li’s inverse rule~\cite{Popov2004a, Watanabe2006, Khavasi2008}, field singularities at metal--dielectric edges~\cite{Gundu2010a, Li2011, Li2012, Mei2014}, omission of \emph{ghost} modes~\cite{Foresti2006}, and most notably, the emergence of \emph{spurious} modes with unbounded propagation constants~\cite{Lyndin2007}. To that effect, numerous remedies have been proposed, including finite-difference or adaptive-resolution schemes~\cite{Lalanne2000, Garnet1999, Garnet2010, Guizal2009}, subsectional spectral or pseudo-spectral approaches~\cite{Morf1995, Edee2011, Edee2015, Randriamihaja2016, Chiou2009, Song2011, Song2013, Garnet2014}, least-squares filtering~\cite{Gundu2010a, Gundu2010b}, artificial loss~\cite{Popov2004a}, and non-modal Fourier methods~\cite{Khavasi2007, Khavasi2009}. Notwithstanding, fundamental questions regarding the nature of spurious modes remain largely unanswered.

While regarded as numerical artifacts, the noted difficulties are also rooted in the spectral structure of the underlying problem, namely a non-selfadjoint indefinite SL equation. Several studies have shown that the associated spectrum exhibits unexpected features, such as modes with propagation constants exceeding the bound typically associated with the maximum dielectric constant~\cite{Tishchenko2005, Foresti2006}, and \emph{ghost} modes with non-real squared propagation constants (i.e., eigenvalues) even in lossless settings~\cite{Foresti2006, Sheng1982, Sturman2007a, Sturman2007b}. Neglecting these modes leads to pronounced convergence defects~\cite{Foresti2006}. Whether such modes are infinite in number appears to be an open question. Furthermore, the TM eigenmodes are not mutually orthogonal in the usual sense; consequently, modal expansions rely on a biorthogonal system, as established in non-selfadjoint spectral theory~\cite{Kocabas2009, Botten1981b, Hanson2013Book, Naimark1968Book}.

A general SL problem over a finite interval seeks nontrivial solutions to 
\begin{equation}\label{eq:SL_general_intro}
-(p\,y')' + q\,y = \lambda\, w\,y,
\end{equation}
where $p,q,w$ are complex-valued functions, $y$ is the eigenfunction, and $\lambda$ is the eigenvalue. The TM problem in gratings lies in this framework, with the magnetic field component $H_y$ as the eigenfunction, $w=p=\epsilon(x)^{-1}$, $q=-k_0^2$, and $\lambda=-\beta^2$, where $k_0$ is the vacuum wave number, $\beta$ is the longitudinal propagation constant, and $\epsilon(x)$ is the permittivity profile, i.e., typically modeled as a piecewise constant periodic function which can change sign for metallic gratings and become complex-valued in the presence of losses.  A similar formulation applies to TM modes in waveguides~\cite{Kocabas2009}.

In classical \emph{definite} SL theory, where all coefficients are real-valued, $p>0$, $w>0$ (a.e.), and $p^{-1},q,w \in L^1(a,b)$, the associated operator is selfadjoint in the weighted Hilbert space $L^2_w(a,b)$. It is bounded from below, with a discrete spectrum of real eigenvalues and a complete orthogonal system of eigenfunctions~\cite{Zettl2005Book, Weidmann1987Book}. Allowing the potential $q$ to be complex while keeping $p>0$ and $w>0$ (a.e.) can lead to complex eigenvalues~\cite{Hilb1911}. If $p$ changes sign while $w>0$ (a.e.), the spectrum remains real under the associated Hilbert space framework, but can become unbounded from both below and above, provided the subsets on which $p$ is positive and negative have nonzero Lebesgue measure~\cite{Moller1999}. A similar unboundedness result holds if $w$ changes sign, under additional assumptions~\cite{Kong2003}. A sign change in $w$ is the characteristic condition for the classical \emph{indefinite} SL problems~\cite{Zettl2005Book, Weidmann1987Book}. Here, Kreĭn space theory provides a natural framework for the analysis~\cite{Curgus1989}.

Beyond the definite setting, spectral properties can indicate additional complexities. Early studies proved the existence and finiteness of non-real eigenvalues for classical indefinite problems~\cite{Haupt1914, Richardson1918, Mingarelli1982, Mingarelli2011}. More recent works have established \emph{a priori} bounds on their real and imaginary parts under various regularity assumptions~\cite{Behrndt2013a, Behrndt2013b, Behrndt2013c, Behrndt2018, Qi2014, Qi2016, Kikonko2016, Xie2013, Xie2017}. The difference with the definite case can be even more subtle concerning eigenfunction completeness. In the sense of the \emph{Riesz basis property}, i.e., whether the eigenfunctions of the indefinite SL operator form a Riesz basis for the associated Hilbert space with $|w|$ as the weight function, the outcome can be non-trivial. While under certain conditions completeness is proven~\cite{Kaper1984, Beals1985, Curgus2013}, the Riesz basis property does not hold in general~\cite{Volkmer1996}, and explicit counterexamples have been constructed~\cite{Pyatkov1992, Fleige1998, Abasheeva1997, Binding2004}.

In the TM metallic grating setting, the governing equation falls into a largely unexplored regime of indefinite SL problems, in which the principal coefficient and the weight are not independent of one another. Classical studies in this area address situations in which the indefiniteness arises through either the weight function or the principal coefficient in a partially decoupled manner. In contrast, the regime considered here is distinguished by a direct coupling of the two: the ratio $w/p$ is required to be positive, so that they share their sign and phase pointwise. To the best of our knowledge, existing results provide only partial guidance on spectral behavior in such coupled settings. In addition, losses render the coefficients complex-valued, so that the problem can also be non-selfadjoint, although the lossless case is the more severe one numerically. This combination of coefficient coupling, indefiniteness, and non-selfadjointness defines a non-classical spectral landscape, which also has important consequences for the stability of numerical modal methods. The TM grating problem is the special case $w=p$.

In this paper, motivated by the TM problem, we consider equation\eqref{eq:SL_general_intro} and establish boundedness of eigenvalues for a relatively general class in which $p$ and $w$ are piecewise in $W^{2,\infty}$, non-vanishing, and admit finitely many discontinuities, with $w/p$ real and positive, while $q$ is bounded. Writing $w=r^2p$ with $r>0$, we further assume continuity of the state vector across each interface $x_j$, the non-degeneracy condition $(p\,r)(x_j^+) + (p\,r)(x_j^-) \neq 0$ at each interface, and pseudo-periodic, Dirichlet, or Neumann boundary conditions. We show that these assumptions imply boundedness of the eigenvalues in the left half-plane, in contrast to the decoupled cases described above, where the eigenvalues are unbounded in both directions. Classical SL theory then yields that only finitely many eigenvalues can lie in this region, and the exclusion region is shown to admit an explicit bound in terms of the coefficients and the interface data. The proof relies only on classical ODE theory and uniform estimates for the local solutions; it requires no operator-theoretic framework, in particular no realization in a Hilbert or Kreĭn space. For piecewise $C^2$ coefficients, which cover the configurations of physical interest, the regularity considerations involved become automatic and the argument is unchanged.

From a computational photonics perspective, this result formalizes a long-standing numerical observation in Fourier modal methods and provides a simple yet rigorous criterion for distinguishing spurious modes. In the TM lamellar grating problem, this can be summarized as follows: as the truncation order increases, modes with $\operatorname{Re}(\beta^2)\to +\infty$ are identified as spurious and have no physical analogue. One should note that, due to the indefinite nature of the governing equation, physical eigenvalues may not be bounded by classical estimates such as $|\beta| < k_0\sqrt{\epsilon_{\max}}$~\cite{Tishchenko2005, Foresti2006}.

The structure of this paper is as follows. In Section~\ref{sec:physical}, we recall the TE and TM scalar formulations and their relation to the general SL form. In Section~\ref{sec:spectral}, we present and prove the main theorem on left half-plane eigenvalue boundedness, together with a proposition giving an explicit bound, a corollary for additional boundary conditions, a finiteness remark, and a remark extending the result to a coefficient ratio of constant phase. In Section~\ref{sec:numerics}, applications of the results to grating analysis are illustrated through numerical examples, followed by the conclusion in Section~\ref{sec:conclusion}.


\section{Physical Motivation}
\label{sec:physical}

In this work, gratings are non-magnetic media, typically piecewise constant, that are periodic in the transverse plane and longitudinally invariant. In the context of gratings, eigenvalue problems arise in two distinct formulations of the same governing equation, each associated with different spectral properties. Consider the general vectorial wave equation for the electric or magnetic fields
\begin{equation}\label{eq:wave_equation}
\nabla \times \nabla \times \bm{E} = k_0^2 \epsilon \bm{E},\qquad \nabla \times ( \epsilon^{-1} \nabla \times \bm{H} ) = k_0^2 \bm{H}.
\end{equation} 
where $k_0 = {\omega}/{c}$ is the vacuum wave number for $\omega$ denoting the angular frequency and $c$ the speed of light. When the wave propagation is assumed in the transverse plane and \eqref{eq:wave_equation} is solved for the frequency as the eigenvalue with given transverse components, e.g., in photonic band structure calculations, the resulting eigenvalue problem is selfadjoint~\cite{Joannopoulos2008Book}. However, when the propagation has a longitudinal component, the transverse components and frequency are fixed, and \eqref{eq:wave_equation} is solved for the longitudinal propagation constant, e.g., in diffraction analyses, selfadjointness holds only in special instances. This paper studies the latter case for lamellar gratings. 

Since the periodicity is one-dimensional, we assume the transverse plane is $xy$, the permittivity profile is $\epsilon(x)$ with periodicity $\ell$, and the plane of incidence is $xz$ (e.g., see Fig.~\ref{fig:setting}). The latter assumption simplifies the problem into distinct scalar TE/TM equations, given the incident waves $\bm{E}_{inc}=E_{inc}\,\hat{\bm y}$ or $\bm{H}_{inc}=H_{inc}\,\hat{\bm y}$, respectively. Otherwise, polarization coupling cannot be avoided under conical incidence~\cite{Li1993b}. 

Considering the time dependence $e^{i\omega t}$ and longitudinal dependence $e^{-i\beta z}$, the vectorial wave equations are reduced to different second-order scalar eigenvalue equations for TE/TM polarizations. Under TE incidence, where the electric field inside the grating is $\bm{E}=E_y\,\hat{\bm y}$, one obtains the well-known eigenvalue equation in terms of $E_y$, as follows:
\begin{equation}\label{eq:TE_basic_short}
\partial_x^2 \,E_y+ \big(k_0^2\epsilon-\beta^2\big)E_y = 0.
\end{equation}
where we use $\partial_x\!=\!d/dx$ for simplicity. This can be written in SL form
\begin{equation}\label{eq:TE_SL_short}
-(pE_y')' + qE_y = \lambda\, wE_y,
\end{equation}
with
\begin{equation}\label{eq:TE_ident_short}
w=p=1,\qquad q=-k_0^2\epsilon(x),\qquad \lambda=-\beta^2.
\end{equation}
Due to the losslessness assumption, all coefficients are real, and since $w=p>0$, the associated SL operator remains selfadjoint, even for metallic gratings where $\epsilon$ changes sign in the optical regime.

For the TM case, the magnetic field becomes $\bm{H}=H_y\,\hat{\bm y}$, and the corresponding eigenvalue equation for $H_y$ takes the form
\begin{equation}\label{eq:TM_basic_short}
\partial_x\!\left(\epsilon^{-1}\partial_x H_y\right)
+ \big(k_0^2 - {\beta^2}\epsilon^{-1}\big) H_y = 0.
\end{equation}
Recast in SL form, this becomes
\begin{equation}\label{eq:TM_SL_short}
-(pH_y')' + qH_y = \lambda\, w H_y,
\end{equation}
with
\begin{equation}\label{eq:TM_ident_short}
w=p=\epsilon(x)^{-1},\qquad q=-k_0^2,\qquad \lambda=-\beta^2.
\end{equation}
Both scalar equations appear in this form in~\cite{Botten1981a, Botten1981b, Li1993b, Li2001Chap}, though in numerical modal methods they are mostly treated separately, and the identification $w=p=\epsilon(x)^{-1}$ is seldom made explicit. Continuity of $H_y$ and $\epsilon^{-1}H_y'$ across interfaces follows from the tangential boundary conditions of Maxwell's equations. Moreover, periodicity imposes the Bloch–Floquet pseudo-periodic condition
\begin{equation}\label{eq:Bloch_short}
H_y(x+\ell)=e^{-i k_{x,0} \ell}H_y(x).
\end{equation}
where $k_{x,0}$ is enforced by the incident plane wave. When $\epsilon(x)$ changes sign, as in metallic regions, the principal coefficient and the weight change sign at the same points, placing the problem outside both the definite theory and the classical indefinite one~\cite{Moller1999, Kong2003}.

A closely related SL formulation arises for TM modes when the lamellar grating is also magnetic, or more commonly in parallel-plate, metal--insulator--metal (MIM), or slab waveguides, where both $\epsilon(x)$ and $\mu(x)$ may vary spatially~\cite{Kocabas2009, Hanson2013Book}. In such problems, SL coefficients are
\begin{equation}
w=p=\epsilon(x)^{-1}, \qquad q=-k_0^2 \mu(x), \qquad \lambda=-\beta^2.
\end{equation}
For a parallel-plate waveguide with perfectly conducting walls, the tangential electric field vanishes at metallic surfaces, implying $E_z=0$, and hence, $\partial_x{H_y}\big|_{\mathrm{PEC}}\!=\!0$, which corresponds to a \emph{Neumann} boundary condition for $H_y$. At higher frequencies, the model tends to a singular SL equation due to the unbounded domain, but the underlying coefficient structure remains the same. In all of the formulations above, the principal coefficient and the weight coincide. The analysis of Sec.~\ref{sec:spectral} covers the more general situation in which they may differ by a positive factor.


\section{Spectral Analysis}
\label{sec:spectral}

In this section, we present a theorem on eigenvalue boundedness in the left half-plane for a class of piecewise smooth SL problems with pseudo-periodic boundary conditions, motivated by the TM formulation of Sec.~\ref{sec:physical}. A proposition then converts this into an explicit bound, and a corollary extends the result to Dirichlet and Neumann boundary conditions. Two remarks follow: that this boundedness implies the finiteness of eigenvalues in the left half-plane, and that the result extends to a coefficient ratio of constant phase. We begin with a summary of the notation used throughout.

\medskip
\subsection{Notation}\label{subsec:notation}

For $m$ a positive integer, we define $\mathbb{N}_m := \{n\in\mathbb{N}:n\le m\}$ where $\mathbb{N}$ is the set of positive integers. Let $\Omega\!=\![a,b] \subset \mathbb{R}$ denote a bounded interval partitioned finitely as follows:
\begin{equation}\label{eq:notation_Omegaj}
  a = x_0 < x_1 < \dots < x_m = b, \qquad \Omega_j := (x_{j-1}, x_j),
\end{equation}
where $j\in \mathbb{N}_m$. We also denote
\begin{equation}\label{eq:notation_ellj}
  \ell_j := x_j - x_{j-1}, \qquad \ell := b-a = \sum\limits_{j=1}^{m}\, \ell_j.
\end{equation}
Further, given a set $I \subset \mathbb{R}$, the closure $\overline{I}$ is the smallest closed subset of $\mathbb{R}$ containing $I$.

All functions are assumed complex-valued unless stated otherwise.
For $x \in \mathbb{R}$ and a function $f$, we denote the one-sided limits (whenever they exist) by
\begin{equation}\label{eq:one_sided_limits}
f(x^-) := \lim_{y \to x^-} f(y), \qquad
f(x^+) := \lim_{y \to x^+} f(y).
\end{equation}
As in Sec.~\ref{sec:physical}, for a function of a single variable $t$ we write $\partial_t\!=\!d/dt$. For functions of $x$, the prime denotes $\partial_x$, and for functions of $\xi$ the dot denotes $\partial_\xi$.

In this text, the notation $f=O(g)$ means that $|f|\le C|g|$ for some real constant $C>0$ independent of the asymptotic parameter under consideration. Moreover, for a matrix-valued function $H=[h_{ij}]$, we define
\begin{equation}\label{eq:notation_bigOmatrix}
O(H):=[O(h_{ij})],
\end{equation}
denoting an asymptotic class of matrix-valued functions whose entries satisfy the corresponding entrywise asymptotic bounds. Algebraic operations involving $O(H)$ are performed using standard matrix addition and multiplication, with the asymptotic bounds propagated entrywise.

The Banach space of Lebesgue-integrable functions $f$ on $I$ is denoted by $L^1(I)$. Likewise, the Banach space of essentially bounded functions $f$ on $I$ is denoted by $L^\infty(I)$ with the norm
\begin{equation}\label{eq:notation_inf_norm}
  \|f\|_{L^\infty(I)} := \operatorname*{ess\,sup}_{x \in I} |f(x)|.
\end{equation}
For an open interval $I$, the Sobolev space $W^{k,\infty}(I)$ consists of functions whose weak derivatives up to order $k$ belong to $L^\infty(I)$. Its norm is given by
\begin{equation}\label{eq:notation_Wk_inf}
\|f\|_{W^{k,\infty}(I)} := \sum_{j=0}^{k} \|f^{(j)}\|_{L^\infty(I)}.
\end{equation}

For a closed interval $\overline{I}$, the space of $k$-times continuously differentiable functions equipped with the supremum norm above is denoted by $C^k(\overline{I})$, with $C(\overline{I})\!:=\!C^0(\overline{I})$.
A function $f$ on $\overline{I}$ is called \emph{Lipschitz} if $|f(x)-f(y)|\le C|x-y|$ for all $x,y\in\overline{I}$ and some constant $C>0$.
Finally, $AC(\overline{I})$ denotes the space of absolutely continuous functions on $\overline{I}$, namely those $f$ for which there exists $g \in L^1(I)$ with $f(x) = f(c) + \int_c^x g(t)\,dt$ for all $x,c \in \overline{I}$; equivalently, $f$ is differentiable almost everywhere, $f' \in L^1(I)$, and $f$ is the integral of $f'$. Every Lipschitz function belongs to $AC(\overline{I})$; absolutely continuous functions carry sets of measure zero to sets of measure zero, and remain absolutely continuous under composition with a monotone Lipschitz function.


\medskip
\subsection{Main Theorem}\label{subsec:theorem}

Here, we consider SL eigenvalue problems of the form
\begin{equation}\label{eq:SL_form}
  -(p\,y')' + q\,y = \lambda\,w\,y, \qquad x \in \Omega,
\end{equation}
where $p$, $q$, and $w$ are functions defined on $\Omega$, periodically extended to $\mathbb{R}$, and $\lambda\in\mathbb{C}$. A solution of \eqref{eq:SL_form} is understood to be a function $y$ satisfying
\[
y,\;py' \in AC(\Omega),
\]
such that \eqref{eq:SL_form} holds almost everywhere on $\Omega$. Throughout, we denote the associated state vector by $Y:=(y,py')^T$, which is therefore continuous across each interface,
\begin{equation}\label{eq:Y_continuity}
Y(x_j^+) = Y(x_j^-),\qquad {j\in \mathbb{N}_m}.
\end{equation}

\begin{theorem}[Left half-plane boundedness of eigenvalues]\label{theorem:main}

Consider the eigenvalue problem \eqref{eq:SL_form}.

\medskip
\noindent
Assume:

\begin{itemize}

\item Regularity of $p$ and $w$:
\begin{equation}\label{eq:pw_regularity}
p,w \in W^{2,\infty}(\Omega_j),\qquad \inf_{x \in \overline{\Omega_j}} |p(x)| > 0, \qquad \inf_{x \in \overline{\Omega_j}} |w(x)| > 0, \qquad {j\in \mathbb{N}_m}.
\end{equation}

\item Positivity of the coefficient ratio:
\begin{equation}\label{eq:ratio_positive}
\frac{w}{p} = r^2 \quad\text{for some}\quad r:\Omega\to\mathbb{R}^+ .
\end{equation}

\item Boundedness of $q$:
\begin{equation}\label{eq:q_boundedness}
q \in L^\infty(\Omega).
\end{equation}

\item Non-degenerate interface condition:
\begin{equation}\label{eq:pr_interface_condition}
(p\,r)(x_j^+) + (p\,r)(x_j^-) \neq 0,
\qquad j\in \mathbb{N}_m.
\end{equation}

\item Pseudo-periodic boundary condition:
\begin{equation}\label{eq:Y_pp_BC}
Y(b^+) = e^{i\varphi} Y(a^+),\qquad {\varphi \in \mathbb{R}}.
\end{equation}

\end{itemize}

\medskip
\noindent
Then the eigenvalues of \eqref{eq:SL_form}, that is, those $\lambda$ for which a nontrivial solution satisfying \eqref{eq:Y_pp_BC} exists, form a bounded set in the open left half-plane, i.e.,
\begin{equation}
\exists\, R_0>0 \quad : \quad\operatorname{Re}(\lambda)<0 \quad\Longrightarrow\quad |\lambda| < R_0.
\end{equation}
\end{theorem}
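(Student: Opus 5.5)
The plan is a WKB/Liouville-type asymptotic analysis on each smooth piece, followed by a transfer-matrix argument. Write $\lambda=-\mu^2$ and choose the root with $\operatorname{Re}\mu\ge 0$. Then
\[
\operatorname{Re}\lambda=(\operatorname{Im}\mu)^2-(\operatorname{Re}\mu)^2<0
\]
forces $|\operatorname{Re}\mu|>|\operatorname{Im}\mu|$, hence $\operatorname{Re}\mu>|\mu|/\sqrt2$. So in the open left half-plane we sit uniformly inside a sector where $e^{\mu s}$ grows like $e^{|\mu|s/\sqrt2}$. I will argue by contradiction: suppose eigenvalues with $\operatorname{Re}\lambda<0$ and $|\lambda|\to\infty$ exist, and show that the pseudo-periodic eigenvalue condition fails for $|\mu|$ large.

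\textbf{Step 1 (local asymptotics).} On each $\Omega_j$ I use $w=r^2p$ to rewrite \eqref{eq:SL_form} as $(py')'=(\mu^2 r^2 p+q)y$, and introduce the Liouville variables $\xi=\int r\,dx$ and $z=(pr)^{1/2}y$, with a continuous branch of the square root on $\overline{\Omega_j}$. This branch exists because $pr$ is continuous and nonvanishing there. The hypothesis $p,w\in W^{2,\infty}(\Omega_j)$ makes $r$ and $(pr)^{1/2}$ belong to $W^{2,\infty}$. The transformed equation is
\[
\ddot z=(\mu^2+Q)z, \qquad Q\in L^\infty,
\]
where $Q$ collects $q/(pr^2)$ and the Schwarzian-type term from $(pr)^{1/2}$.

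A Volterra integral equation argument, with kernel $\sinh(\mu(\xi-s))/\mu$ and $\operatorname{Re}\mu>|\mu|/\sqrt2$, then gives solutions
\[
z_\pm=e^{\pm\mu\xi}\bigl(1+O(1/|\mu|)\bigr),\qquad \dot z_\pm=\pm\mu e^{\pm\mu\xi}\bigl(1+O(1/|\mu|)\bigr),
\]
uniformly on $\overline{\Omega_j}$. Converting back to the state vector $Y=(y,py')^T$, the solutions behave like $(pr)^{-1/2}e^{\pm\mu\xi}\,(1,\pm\mu\,pr)^T$ to leading order.

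\textbf{Step 2 (transfer matrices).} The transfer matrix $T_j$ maps $Y(x_{j-1}^+)$ to $Y(x_j^-)$. With $L_j=\int_{\Omega_j} r>0$, Step 1 gives
\[
T_j=\tfrac12 e^{\mu L_j}\,u_j v_j^T\bigl(1+O(1/|\mu|)\bigr)+O\bigl(e^{-\operatorname{Re}\mu\,L_j}\bigr)\cdot(\text{polynomial in }\mu),
\]
where the vectors are
\[
u_j=(pr)(x_j^-)^{-1/2}\bigl(1,\ \mu(pr)(x_j^-)\bigr)^T,\qquad
v_j=(pr)(x_{j-1}^+)^{1/2}\bigl(1,\ (\mu(pr)(x_{j-1}^+))^{-1}\bigr)^T .
\]
By \eqref{eq:Y_continuity}, the monodromy over a period is $M=T_m\cdots T_1$. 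Since $\det T_j=1$ (the Wronskian $y_1\,py_2'-y_2\,py_1'$ is constant), we have $\det M=1$. Therefore \eqref{eq:Y_pp_BC} admits a nontrivial solution only if $\det(M-e^{i\varphi}I)=0$, which is equivalent to
\[
\operatorname{tr}M=2\cos\varphi .
\]

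\textbf{Step 3 (dominant trace).} Multiplying out, the dominant term of $M$ is a product of rank-one pieces. Its trace is
\[
2^{-m}e^{\mu\ell_r}\prod_{j=1}^m\bigl(v_{j+1}^Tu_j\bigr)\bigl(1+O(1/|\mu|)\bigr),
\qquad \ell_r:=\int_a^b r,
\]
with indices read cyclically, so that $v_{m+1}=v_1$. This cyclic reading is exactly where periodic extension makes $x_m\equiv x_0$ an interface. A direct computation gives
\[
v_{j+1}^Tu_j=\frac{(pr)(x_j^+)+(pr)(x_j^-)}{\sqrt{(pr)(x_j^+)(pr)(x_j^-)}},
\]
which is independent of $\mu$ and nonzero by \eqref{eq:pr_interface_condition}. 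All remaining terms are smaller by a factor $O(1/|\mu|)$, or by factors $e^{-2\operatorname{Re}\mu L_j}$, times at most polynomial growth. Hence
\[
|\operatorname{tr}M|\ge c\,e^{\ell_r|\mu|/\sqrt2}\bigl(1-O(1/|\mu|)\bigr)\to\infty,
\]
with $c>0$, contradicting $|\operatorname{tr}M|\le2$ once $|\mu|$ exceeds an explicit threshold. That threshold gives $R_0$.

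\textbf{Main obstacle.} I expect the hardest part to be Step 1 combined with the bookkeeping in Step 3. First, the $O(1/|\mu|)$ remainders must be uniform in $x$ and in $\arg\mu\in(-\pi/4,\pi/4)$ for complex $p$, where the branch of $(pr)^{1/2}$ must be handled carefully. Second, the factors of $\mu$ in $u_j$ and $\mu^{-1}$ in $v_j$ must cancel in the products $v_{j+1}^Tu_j$, so that the cross terms (one subdominant factor $e^{-\mu L_j}$ among dominant ones) are genuinely exponentially smaller rather than merely polynomially comparable. For the cross terms I would bound the entries of $T_j$ in the $\mu$-rescaled state $(y,\,py'/\mu)$, in which all entries are $O(e^{\operatorname{Re}\mu L_j})$ and the remainder matrices are uniformly $O(1/|\mu|)$ relative to the leading ones.
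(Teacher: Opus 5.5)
Your proposal is correct and follows the paper's argument (your $\mu$ is the paper's $\kappa$): a Liouville transformation, then Volterra estimates giving a rank-one dominant transfer matrix $e^{\kappa\tilde\ell_j}\bigl(S+\kappa^{-1}O(S)\bigr)$ on each piece (your $\mu$-rescaled state plays the role of the paper's $O(S)$ class), and finally growth of $|\operatorname{tr}M|$ like $e^{\operatorname{Re}\kappa\,\tilde\ell}$ times a nonzero interface product, which contradicts $\operatorname{tr}M=2\cos\varphi$. The only difference is bookkeeping: you keep the original state $Y$, so interfaces are transparent and the interface data enter through $u_j,v_j$, whereas the paper uses the Liouville state with jump matrices $J_j$; since $\tfrac12 v_{j+1}^{T}u_j=\gamma_{\alpha_j}$, your leading coefficient $2^{-m}\prod_j v_{j+1}^{T}u_j$ is exactly the paper's $\Gamma_m$.
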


\medskip
\begin{proof}

The proof consists of the following steps. First, the eigenvalue problem is transformed into a Schrödinger-type form on a rescaled interval, and the local transfer and interface jump matrices are derived. This yields the monodromy matrix and the characteristic equation. Next, the local solutions are expressed in integral form and estimated on each subinterval, which yields their asymptotic behavior at the subinterval endpoints. These asymptotics are then used to derive the behavior of the monodromy matrix and to show that the characteristic equation admits no sufficiently large solutions in the complex left half-plane.

\medskip
\noindent\textbf{Step 1: Liouville transformation and transfer-matrix formulation.}
With $r$ as in \eqref{eq:ratio_positive}, which is uniquely determined since $r>0$, we introduce, for $j\in\mathbb{N}_m$, the transformed variable and the transformed partition
\begin{equation}\label{eq:xi_def}
  \xi(x) := \int_a^x r(s)\,ds,
  \qquad
  \xi_j := \xi(x_j),
  \qquad
  \widetilde\Omega_j := (\xi_{j-1},\xi_j),
  \qquad
  \tilde\ell_j := \int_{\Omega_j} r(s)\,ds .
\end{equation}

Since $p,w\in W^{2,\infty}(\Omega_j)$ are non-vanishing, the ratio $w/p$ belongs to $W^{2,\infty}(\Omega_j)$ and is bounded away from zero, whence $r=(w/p)^{1/2}\in W^{2,\infty}(\Omega_j)$ with $\inf_{x\in\overline{\Omega_j}}r(x)>0$. In particular, $r$ is bounded above and bounded away from zero on $\overline{\Omega_j}$, and therefore $\xi$ is a strictly increasing bijection of $\Omega$ onto $\widetilde\Omega:=[0,\tilde\ell]$, where $\tilde\ell:=\sum_{j=1}^m\tilde\ell_j$. It maps $\Omega_j$ onto
$\widetilde\Omega_j$, whose length is $\xi_j-\xi_{j-1}=\tilde\ell_j$. Moreover, $\xi\in AC(\Omega)$ by \eqref{eq:xi_def}, which justifies the substitution $d\xi=r\,dx$.
Since $r$ is periodically extended along with $p$ and $w$, one has $\xi(x+\ell)=\xi(x)+\tilde\ell$, so that $\xi$ extends to a bijection of $\mathbb{R}$ and $\widetilde\Omega$ is periodically extended with period $\tilde\ell$.

On each subinterval $\Omega_j$ with $j\in\mathbb{N}_m$, we apply the \emph{Liouville transformation} \cite{Birkhoff1989Book, Everitt1982} by defining
\begin{equation}\label{eq:u_def}
  u := \sigma y,
  \qquad
  \sigma := (pw)^{\frac14} = (pr)^{\frac12},
\end{equation}
regarded as a function of $\xi$. Here, on each subinterval $\Omega_j$, a branch of the square root is fixed and extended periodically, such that $\sigma(x)$ is $\ell$-periodic.
The transformed solution $u$ is then understood in the piecewise sense
\begin{equation}\label{eq:u_expression}
u(\xi) := \sum\limits_{j=1}^{m}\; \bm{1}_{\{\xi\in\widetilde\Omega_j\}}\, u_j(\xi),
\qquad u_j := u|_{\widetilde\Omega_j},
\end{equation}
where $\bm{1}_{\{\xi\in\widetilde\Omega_j\}}$ denotes the indicator function of $\widetilde\Omega_j$. Rewriting the SL equation \eqref{eq:SL_form} in terms of $u$ reduces it on each subinterval $\widetilde\Omega_j$ to the local spectral problem
\begin{equation}\label{eq:uj_gov_eq}
  \mathscr{L}_j u_j = \lambda u_j, \qquad \xi \in \widetilde\Omega_j,
\end{equation}
associated with the Schrödinger-type operator
\begin{equation}\label{eq:Lj_def}
  \mathscr{L}_j := -\partial_\xi^2 + v_j,
\end{equation}
where the transformed coefficient is given, in terms of the original variable, by
\begin{equation}\label{eq:Vj_def}
  v_j := \frac{1}{r^2}\left[\frac{q}{p}
  + \frac12\Big(\frac{p''}{p}\Big)
  - \frac14\Big(\frac{p'}{p}\Big)^2
  + \frac12\Big(\frac{r''}{r}\Big)
  - \frac34\Big(\frac{r'}{r}\Big)^2\right],
  \qquad x \in \Omega_j.
\end{equation}
Since $p,r\in W^{2,\infty}(\Omega_j)$ are bounded away from zero, we have $p',p'',r',r'',p^{-1},r^{-1}\in L^\infty(\Omega_j)$. Consequently, each term in \eqref{eq:Vj_def} belongs to $L^\infty(\Omega_j)$, and hence $v_j\in L^\infty(\Omega_j)$. Since $\xi\in AC(\Omega)$, it carries the exceptional set of this essential bound to a set of measure zero in $\widetilde\Omega_j$, so that $v_j\circ\xi^{-1}\in L^\infty(\widetilde\Omega_j)$ with the same essential supremum. We write $v_j$ for either function according to context.

Moreover, $\sigma,\sigma',\sigma^{-1}\in AC(\overline{\Omega_j})$, and absolutely continuous functions are closed under multiplication, so $u=\sigma\,y$ and $\dot{u} = \sigma^{-1}( py'+ \sigma'r^{-1}u)$ are absolutely continuous in $x$. 
Since $\xi(x_2)-\xi(x_1)\ge(\inf_\Omega r)(x_2-x_1)$ for $x_1<x_2$, the inverse satisfies
\begin{equation}\label{eq:xi_inv_lip}
\big|\xi^{-1}(\eta_2)-\xi^{-1}(\eta_1)\big| \le \frac{|\eta_2-\eta_1|}{\inf_\Omega r},
\qquad \eta_1,\eta_2\in\widetilde\Omega,
\end{equation}
that is, $\xi^{-1}$ is Lipschitz. Consequently, composition with $\xi^{-1}$, which is monotone and Lipschitz by \eqref{eq:xi_inv_lip}, leaves $u$ and $\dot{u}$ absolutely continuous in $\xi$. Since $\sigma$ is non-vanishing, the map $y\mapsto u$ is a bijection between solutions of \eqref{eq:SL_form} on $\Omega_j$ and functions $u$ with $u,\dot{u}\in AC(\overline{\widetilde\Omega_j})$ satisfying \eqref{eq:uj_gov_eq} almost everywhere.

One can express \eqref{eq:uj_gov_eq} as an equivalent first-order system
\begin{equation}\label{eq:first_order_system}
  \dot{U}_j = A_j\,U_j, \qquad \xi \in \widetilde\Omega_j,
\end{equation}
where $U_j:=(u_j,\dot{u}_j)^\top$ is the Liouville state vector on the $j$-th subinterval and  
\begin{equation}\label{eq:Aj_def}
  A_j :=
  \begin{pmatrix}
    0 & 1 \\[2pt]
    v_j-\lambda & 0
  \end{pmatrix}.
\end{equation}
The relationship between the original and the transformed state vectors $Y$ and $U_j$ can be expressed, at corresponding points $x\in\Omega_j$ and $\xi=\xi(x)$, as
\begin{equation}\label{eq:R_def}
  Y(x) = R(x)\, U_j(\xi(x)), \qquad
  R :=
  \begin{pmatrix}
    \sigma^{-1} & 0 \\[1pt]
    -\sigma'/r & \sigma
  \end{pmatrix}.
\end{equation}
Here and in what follows, all quantities involving $p$, $r$ and their derivatives at interface points are understood in the sense of one-sided limits, which exist since $p,r\in W^{2,\infty}(\Omega_j)$ implies $p,p',r,r'\in C(\overline{\Omega_j})$. Hence, $R(x)$ is well-defined on each subinterval, and is $\ell$-periodic by the periodic extension of its entries. Subsequently, one can also define the global Liouville state vector as
\begin{equation}\label{eq:U_expression}
U(\xi) := \sum\limits_{j=1}^{m}\;  \bm{1}_{\{\xi\in \widetilde\Omega_j\}}\, U_j(\xi),
\end{equation}
so that $U\big|_{\widetilde\Omega_j} = U_j$.

The fundamental matrix of \eqref{eq:first_order_system} on $\widetilde\Omega_j$ is defined as the unique solution of
\begin{equation}\label{eq:fundamental_matrix}
  \dot{\Phi}_j = A_j\,\Phi_j, \qquad \Phi_j(\xi_{j-1}) = I,
\end{equation}
where $I$ is the $2\times2$ identity matrix. Consequently, if we denote $\Phi_j=[U_{j,+}|U_{j,-}]$, then $U_{j,\pm}:=(u_{j,\pm},\dot{u}_{j,\pm})^\top$ are the two independent solutions of \eqref{eq:first_order_system}, satisfying $U_{j,+}(\xi_{j-1})=(1,0)^\top$ and $U_{j,-}(\xi_{j-1})=(0,1)^\top$. Moreover, the local transfer matrix can be defined as:
\begin{equation}\label{eq:transfer_matrix}
  T_j := \Phi_j(\xi_j)=
  \begin{pmatrix}
    u_{j,+}(\xi_j) & u_{j,-}(\xi_j) \\[1pt]
    \dot{u}_{j,+}(\xi_j) & \dot{u}_{j,-}(\xi_j)
  \end{pmatrix},
\end{equation}
which propagates the Liouville state vector across the $j$-th subinterval, i.e.,
\begin{equation}\label{eq:transfer_relation}
  U(\xi_j^-) = T_j \, U(\xi_{j-1}^+).
\end{equation}
By Liouville’s formula for linear systems (see, e.g.,~\cite{Chicone2006Book})
\begin{equation}\label{eq:liouville_formula}
\det \Phi_j(\xi)
= \det \Phi_j(\xi_{j-1})\,
\exp\!\Big({\int\nolimits_{\xi_{j-1}}^{\xi} \!\operatorname{tr} A_j(\tau)\, d\tau}\Big),
\end{equation}
and since $\Phi_j(\xi_{j-1}) = I$ and $\operatorname{tr} A_j = 0$, it follows that:
\begin{equation}\label{eq:T_det1}
  \det T_j = \det \Phi_j(\xi_j) = 1.
\end{equation}

Now, the jump relationship for the Liouville state vector $U$ at each interface $\xi=\xi_j$ can be expressed using the continuity of $Y$ from \eqref{eq:Y_continuity} and the relationship \eqref{eq:R_def}
\begin{equation}\label{eq:U_J}
  U(\xi_j^+) = J_j \, U(\xi_j^-), 
  \qquad 
  J_j := R(x_j^+)^{-1} R(x_j^-),
\end{equation}
the one-sided limits corresponding under the bijection $\xi$. For $j=m$, periodic extension of $R$ gives $R(x_m^+)=R(x_0^+)$, so that $J_m$ is well defined. Hence, the interface at $x_0=a$ is identified with that at $x_m=b$, and no separate condition arises at $j=0$. Evaluating $J_j$ explicitly gives
\begin{equation}\label{eq:J_explicit}
  J_j =
  \begin{pmatrix}
    \alpha_j & 0 \\[0.5em]
    \beta_j & \alpha_j^{-1}
  \end{pmatrix},
\end{equation}
where
\begin{equation}\label{eq:alpha_beta_def}
\alpha_j := \frac{\sigma(x_j^+)}{\sigma(x_j^-)}, 
\qquad
\beta_j := \frac{1}{2}\,
\frac{\dfrac{(p\,r)'(x_j^+)}{r(x_j^+)} - \dfrac{(p\,r)'(x_j^-)}{r(x_j^-)}}
{\sigma(x_j^+)\, \sigma(x_j^-)}.
\end{equation}
Since $R(x_j^\pm)$ are well-defined, $J_j$ is well-defined, with $\det J_j = 1$. 

Composing the transfer within each subinterval, via $T_j$, and the transmission across interfaces, via $J_j$, the global monodromy matrix for the periodic problem can be written as
\begin{equation}\label{eq:monodromy}
  {M} = \prod\limits_{j=0}^{m-1} J_{m-j} T_{m-j} =  J_m T_m \, J_{m-1} T_{m-1} \cdots J_1 T_1,
\end{equation}
which satisfies
\begin{equation}\label{eq:monodromy_relation}
  U(\tilde\ell^{\,+}) = {M}\, U(0^+).
\end{equation}
The pseudo-periodic boundary condition \eqref{eq:Y_pp_BC} implies for the Liouville state vector that $\psi\,R(a^+) U(0^+) = R(b^+) U(\tilde\ell^{\,+})$ with $\psi := e^{i\varphi}$. Since $\sigma$ is $\ell$-periodic by \eqref{eq:u_def}, one has $R(a^+) = R(b^+)$, and the boundary condition reduces to
\begin{equation}\label{eq:semi_periodic_bc}
  \psi\, U(0^+) = U(\tilde\ell^{\,+}).
\end{equation}
Comparing \eqref{eq:monodromy_relation} and \eqref{eq:semi_periodic_bc}, if we define the characteristic matrix $\Delta\!:=\! \psi I -{M}$ for the periodic problem, the eigenvalues are the zeros of
\begin{equation}\label{eq:char_det}
  \det{\Delta}
  = \psi^2 - \psi\, \operatorname{tr}{M} + \det{M}.
\end{equation}
Since each local transfer matrix and each jump matrix has determinant $1$, we have $\det{M}=1$, and therefore the eigenvalues satisfy
\begin{equation}\label{eq:D_final}
  \operatorname{tr}{M} = 2\cos(\varphi).
\end{equation}

\medskip
\noindent\textbf{Step 2: Local solutions and uniform errors.} 
Let $\kappa := (-\lambda)^{\frac12} = \rho + i\nu$ with $\rho, \nu \in \mathbb{R}$ and the branch chosen so that $\rho>0$. Knowing that $\operatorname{Re}(\lambda)\!=\!\nu^2\!-\!\rho^2\!<\!0$, what we need to show is that $|\kappa|$, equivalently $\rho$, cannot tend to $+\infty$, while $|\nu|<\rho$. Since the latter inequality implies $\rho<|\kappa|<\sqrt{2}\,\rho$, henceforth, we may use the following equivalence:
\begin{equation}\label{eq:O_equivalence}
  \kappa^d\,e^{c\kappa}\,O\big(h(\rho)\big)=O\big(\kappa^d\,e^{c\kappa}\,h(\rho)\big)=O\big(\rho^d\,e^{c\rho}\,h(\rho)\big)=\rho^d\,e^{c\rho}\,O\big(h(\rho)\big),
\end{equation}
for constants $c,d\in\mathbb{R}$ and non-negative function $h$.

We begin by rewriting \eqref{eq:uj_gov_eq} in the following form:
\begin{equation}\label{eq:uj_eq}
  \mathscr{D}_\kappa u_j = v_j u_j,\qquad \xi\in \widetilde\Omega_j,
\end{equation}
where $\mathscr{D}_\kappa := \partial_\xi^2 - \kappa^2$ is a differential operator with constant coefficients. For $\xi,\tau\in\widetilde\Omega_j$, we denote by $G(\xi,\tau)$ the Green function satisfying
\begin{equation}\label{eq:G_eq}
  \mathscr{D}_\kappa G(\xi,\tau) = \delta(\xi-\tau), \qquad 
  G(\xi_{j-1},\tau) = \partial_\xi G(\xi_{j-1},\tau) = 0.
\end{equation}
The normalization conditions at $\xi_{j-1}$ ensure uniqueness of $G(\cdot,\tau)$ and correspond to the choice of fundamental solutions normalized at the left endpoint. Its explicit form is
\begin{equation}\label{eq:G_explicit_indicator}
G(\xi,\tau) = \kappa^{-1} \sinh\!\big(\kappa(\xi-\tau)\big) \, \bm{1}_{\{\tau \le \xi\}},
\end{equation}
where $\bm{1}_{\{\tau\le \xi\}}$ denotes the indicator of the set
$\{(\xi,\tau)\in\widetilde\Omega_j\times\widetilde\Omega_j:\ \tau\le \xi\}$.

Applying the Green function method \cite{Dudley2015Book} to \eqref{eq:uj_eq}, the solution can be expressed explicitly as
\begin{equation}\label{eq:G_explicit}
u_j(\xi)
= \int_{\widetilde\Omega_j} G(\xi,\tau)\, v_j(\tau)\, u_j(\tau)\, d\tau
- \Big[\, \dot{u}_j(\tau) \, G(\xi,\tau) - u_j(\tau) \, \partial_\tau G(\xi,\tau) \, \Big]_{\tau=\xi_{j-1}}^{\tau=\xi_j}.
\end{equation}
Here, the bracketed term accounts for the contributions from the endpoints. At the upper endpoint $\tau=\xi_j$, the boundary contribution vanishes since $G(\xi,\tau)$ and $\partial_\tau G(\xi,\tau)$ are supported on $\{\tau \le \xi\}$, while $\xi < \xi_j$ for all $\xi \in \widetilde\Omega_j$. At the lower endpoint $\tau = \xi_{j-1}$, the Green function terms take the form of the homogeneous solutions of $\mathscr{D}_\kappa$ operator over $\widetilde\Omega_j$. Specifically, the above equation can be written as
\begin{equation}\label{eq:u_Green_with_u_h}
u_j(\xi)=u_j(\xi_{j-1})\,\tilde{u}_{j,+}(\xi) + \dot{u}_j(\xi_{j-1})\,\tilde{u}_{j,-}(\xi)
+\int_{\widetilde\Omega_j} G(\xi,\tau)\,v_j(\tau)\,u_j(\tau)\,d\tau,
\end{equation}
where $\tilde{u}_{j,\pm}$ denote the homogeneous solutions of $\mathscr{D}_\kappa \tilde{u}_{j,\pm}=0$ with the normalized initial conditions, explicitly given by
\begin{equation}\label{eq:u_h_def}
\tilde{u}_{j,+}(\xi):=\cosh\!\big(\kappa(\xi-\xi_{j-1})\big),\qquad
\tilde{u}_{j,-}(\xi):={\kappa}^{-1}\sinh\!\big(\kappa(\xi-\xi_{j-1})\big),
\end{equation}
with derivatives
\begin{equation}\label{eq:u_h_prime_def}
\dot{\tilde{u}}_{j,+}(\xi)=\kappa\sinh\!\big(\kappa(\xi-\xi_{j-1})\big),\qquad
\dot{\tilde{u}}_{j,-}(\xi)=\cosh\!\big(\kappa(\xi-\xi_{j-1})\big).
\end{equation}

If we denote the two independent solutions of $\mathscr{D}_\kappa u_j=v_j u_j$ by $u_{j,\pm}$, such that
\begin{equation}\label{eq:u_ic}
u_{j,+}(\xi_{j-1})=1,\quad \dot{u}_{j,+}(\xi_{j-1})=0,\qquad
u_{j,-}(\xi_{j-1})=0,\quad \dot{u}_{j,-}(\xi_{j-1})=1,
\end{equation}
they can be expressed satisfying the following integral equations:
\begin{equation}\label{eq:u_integral}
u_{j,\pm}= \tilde{u}_{j,\pm} + \int_{\widetilde\Omega_j} G(\xi,\tau)\,v_j(\tau)\,u_{j,\pm}(\tau)\,d\tau.
\end{equation}
The integral term of \eqref{eq:u_integral} may be differentiated with respect to $\xi$ without recourse to a differentiation-under-the-integral theorem. Indeed, denoting $g_\pm := v_j\, u_{j,\pm}$, which belongs to $L^1(\widetilde\Omega_j)$ since $v_j\in L^\infty(\widetilde\Omega_j)$ and $u_{j,\pm}\in AC(\overline{\widetilde\Omega_j})$, the indicator in \eqref{eq:G_explicit_indicator} restricts the integration to $\tau\in(\xi_{j-1},\xi)$, and the addition formula for the hyperbolic sine separates the variables as
\begin{equation}\label{eq:I_split}
\int_{\widetilde\Omega_j} G(\xi,\tau)\,g_\pm(\tau)\,d\tau
=
\frac{\sinh(\kappa \xi)}{\kappa}\,\Psi^\pm_{\mathrm{c}}(\xi)
-
\frac{\cosh(\kappa \xi)}{\kappa}\,\Psi^\pm_{\mathrm{s}}(\xi),
\end{equation}
where
\begin{equation}\label{eq:Psi_def}
\Psi^\pm_{\mathrm{c}}(\xi) := \int_{\xi_{j-1}}^{\xi}\!\cosh(\kappa \tau)\,g_\pm(\tau)\,d\tau,
\qquad
\Psi^\pm_{\mathrm{s}}(\xi) := \int_{\xi_{j-1}}^{\xi}\!\sinh(\kappa \tau)\,g_\pm(\tau)\,d\tau.
\end{equation}
The integrands in \eqref{eq:Psi_def} belong to $L^1(\widetilde\Omega_j)$, so $\Psi^\pm_{\mathrm{c}},\Psi^\pm_{\mathrm{s}}\in AC(\overline{\widetilde\Omega_j})$ with $\dot{\Psi}^\pm_{\mathrm{c}}=\cosh(\kappa \xi)g_\pm$ and $\dot{\Psi}^\pm_{\mathrm{s}}=\sinh(\kappa \xi)g_\pm$ almost everywhere. Since $\sinh(\kappa \xi)$ and $\cosh(\kappa \xi)$ are continuously differentiable in $\xi$ on $\overline{\widetilde\Omega_j}$, the right-hand side of \eqref{eq:I_split} belongs to $AC(\overline{\widetilde\Omega_j})$ and the product rule applies almost everywhere. Now, we may substitute \eqref{eq:I_split} into \eqref{eq:u_integral} and differentiate both sides. Since the terms carrying $\dot{\Psi}^\pm_{\mathrm{c}}$ and $\dot{\Psi}^\pm_{\mathrm{s}}$ vanish, i.e.,
\begin{equation}\label{eq:cancel}
\frac{\sinh(\kappa \xi)}{\kappa}\,\dot{\Psi}^\pm_{\mathrm{c}}
-
\frac{\cosh(\kappa \xi)}{\kappa}\,\dot{\Psi}^\pm_{\mathrm{s}} = 0,
\end{equation}
the remaining terms recombine by the subtraction formula for the hyperbolic cosine, yielding
\begin{equation}\label{eq:up_integral}
\dot{u}_{j,\pm}= \dot{\tilde{u}}_{j,\pm} + \int_{\widetilde\Omega_j} \partial_\xi G(\xi,\tau)\,v_j(\tau)\,u_{j,\pm}(\tau)\,d\tau,
\qquad
\partial_\xi G(\xi,\tau) = \cosh\!\big(\kappa(\xi-\tau)\big)\,\bm{1}_{\{\tau\le \xi\}} .
\end{equation}
In particular, the right-hand side of \eqref{eq:up_integral} is the sum of a continuously differentiable function and an indefinite integral of an $L^1$ function, so $\dot{u}_{j,\pm}\in AC(\overline{\widetilde\Omega_j})$, consistently with the solution class fixed in Sec.~\ref{subsec:theorem}.

We intend to estimate the local solutions uniformly, and for that, we start by bounding the homogeneous solutions and the Green function. Note that using the definition of the hyperbolic functions for $\kappa\in\mathbb{C}$, $\rho:=\operatorname{Re}(\kappa)>0$, and $t\in\mathbb{R}^+$, one readily verifies that
\begin{equation}\label{eq:hyp_bounds1}
\sinh(\rho t)
\le
|\sinh(\kappa t)|,|\cosh(\kappa t)|
\le
\cosh(\rho t)
\le
e^{\rho t}.
\end{equation}
Applying \eqref{eq:hyp_bounds1} with $t=\xi-\xi_{j-1}$ to the homogeneous solutions \eqref{eq:u_h_def}, and with $t=\xi-\tau$ to the Green function \eqref{eq:G_explicit_indicator} and its derivative in \eqref{eq:up_integral}, one obtains for $\xi_{j-1} < \tau \le \xi \le \xi_j$
\begin{alignat}{2}
  |\tilde{u}_{j,+}| & \le e^{\rho (\xi-\xi_{j-1})},
  \qquad &
  |\tilde{u}_{j,-}| & \le |\kappa|^{-1} e^{\rho (\xi-\xi_{j-1})},
  \label{eq:uh_bounds}
  \\
  |G| & \le |\kappa|^{-1} e^{\rho (\xi-\tau)},
  \qquad &
  |\partial_\xi G| & \le e^{\rho (\xi-\tau)} .
  \label{eq:G_bounds}
\end{alignat}

Let us denote $V_j := \|v_j\|_{L^\infty(\widetilde\Omega_j)}$. To compensate for the exponential kernel bounds in \eqref{eq:G_bounds}, we measure the local solutions against the same exponential growth, defining
\begin{equation}\label{eq:Mj_def}
M_{j,\pm} := \sup_{\xi\in\overline{\widetilde\Omega_j}}\, |u_{j,\pm}(\xi)|\, e^{-\rho (\xi-\xi_{j-1})},
\end{equation}
which is finite since $u_{j,\pm}\in AC(\overline{\widetilde\Omega_j})$. With this choice the two exponentials cancel inside the integrals of both \eqref{eq:u_integral} and \eqref{eq:up_integral}, since $|u_{j,\pm}(\tau)|\le M_{j,\pm}\,e^{\rho (\tau-\xi_{j-1})}$ together with \eqref{eq:G_bounds} gives
\begin{alignat}{2}
  |G|\,|u_{j,\pm}(\tau)|
  & \le
  \frac{M_{j,\pm}}{|\kappa|}\; e^{\rho (\xi-\tau)}\, e^{\rho (\tau-\xi_{j-1})}
  =
  \frac{M_{j,\pm}}{|\kappa|}\; e^{\rho (\xi-\xi_{j-1})},
  \label{eq:kernel_cancel}
  \\
  |\partial_\xi G|\,|u_{j,\pm}(\tau)|
  & \le
  M_{j,\pm}\; e^{\rho (\xi-\tau)}\, e^{\rho (\tau-\xi_{j-1})}
  =
  M_{j,\pm}\; e^{\rho (\xi-\xi_{j-1})},
  \label{eq:kernel_cancel_deriv}
\end{alignat}
both independent of the integration variable. The first of these suffices to bound $M_{j,\pm}$ itself, since \eqref{eq:up_integral} expresses $\dot{u}_{j,\pm}$ through $u_{j,\pm}$ and requires no separate a priori estimate. Taking absolute values pointwise in \eqref{eq:u_integral} and using \eqref{eq:uh_bounds} and \eqref{eq:kernel_cancel} therefore yields, for every $\xi\in\overline{\widetilde\Omega_j}$,
\begin{align}
|u_{j,\pm}(\xi)|
&\;\le\;
|\tilde{u}_{j,\pm}(\xi)|
+
\int_{\xi_{j-1}}^{\xi}\! \big|G(\xi,\tau)\big|\,\big|v_j(\tau)\big|\,\big|u_{j,\pm}(\tau)\big|\,d\tau
\nonumber
\\
&\;\le\;
|\tilde{u}_{j,\pm}(\xi)|
+
\frac{V_j\,M_{j,\pm}}{|\kappa|}\; e^{\rho (\xi-\xi_{j-1})}\!\!\int_{\xi_{j-1}}^{\xi}\!\! d\tau
\nonumber
\\
&\;\le\;
|\tilde{u}_{j,\pm}(\xi)|
+
r_j\, M_{j,\pm}\, e^{\rho (\xi-\xi_{j-1})},
\qquad
r_j:=\frac{\tilde\ell_j\, V_j}{|\kappa|} = O(\kappa^{-1}).
\label{eq:pointwise_chain}
\end{align}
Multiplying \eqref{eq:pointwise_chain} by $e^{-\rho (\xi-\xi_{j-1})}$, bounding the homogeneous terms by \eqref{eq:uh_bounds}, and taking the supremum over $\xi\in\overline{\widetilde\Omega_j}$ as in \eqref{eq:Mj_def} gives
\begin{equation}\label{eq:M_selfbound}
M_{j,+}\le 1+r_j\,M_{j,+},
\qquad
M_{j,-}\le |\kappa|^{-1}+r_j\,M_{j,-} .
\end{equation}
We would like to ensure that $\rho$ is large enough so that $r_j\le\tfrac12$ for all $j\in\mathbb{N}_m$, where $\tilde\ell_j$ is given by \eqref{eq:xi_def}. Since $\rho<|\kappa|$, this is guaranteed whenever
\begin{equation}\label{eq:rho_min}
\rho \ge \rho_{0} := 2\max\limits_{j\in\mathbb{N}_m} \big\{\tilde\ell_j\, V_j\big\},
\end{equation}
in which case \eqref{eq:M_selfbound} may be solved for $M_{j,\pm}$ to give
\begin{equation}\label{eq:M_bound}
M_{j,+}\le 2,
\qquad
M_{j,-}\le 2\,|\kappa|^{-1}.
\end{equation}

Now, one may rewrite \eqref{eq:u_integral} and \eqref{eq:up_integral} as follows:
\begin{equation}\label{eq:deviation_integrals}
u_{j,\pm}-\tilde{u}_{j,\pm}
=
\int_{\xi_{j-1}}^{\xi}\! G\,v_j\,u_{j,\pm}\,d\tau,
\qquad
\dot{u}_{j,\pm}-\dot{\tilde{u}}_{j,\pm}
=
\int_{\xi_{j-1}}^{\xi}\! \partial_\xi G\,v_j\,u_{j,\pm}\,d\tau .
\end{equation}
Taking absolute values in \eqref{eq:deviation_integrals}, applying the bounds \eqref{eq:kernel_cancel}--\eqref{eq:kernel_cancel_deriv} with \eqref{eq:M_bound} and $|v_j|\le V_j$, and integrating over an interval of length at most $\tilde\ell_j$ therefore yields the following uniform errors, for all $\xi\in\overline{\widetilde\Omega_j}$:
\begin{alignat}{2}
  \big|u_{j,+}-\tilde{u}_{j,+}\big| & \le 2\,r_j\, e^{\rho (\xi-\xi_{j-1})},
  \qquad &
  \big|\dot{u}_{j,+}-\dot{\tilde{u}}_{j,+}\big| & \le 2\,|\kappa|\,r_j\, e^{\rho (\xi-\xi_{j-1})},
  \label{eq:u_plus_error}
  \\
  \big|u_{j,-}-\tilde{u}_{j,-}\big| & \le 2\,|\kappa|^{-1} r_j\, e^{\rho (\xi-\xi_{j-1})},
  \qquad &
  \big|\dot{u}_{j,-}-\dot{\tilde{u}}_{j,-}\big| & \le 2\,r_j\, e^{\rho (\xi-\xi_{j-1})}.
  \label{eq:u_minus_error}
\end{alignat}

\medskip
\noindent\textbf{Step 3: Asymptotic monodromy analysis.}
Evaluating \eqref{eq:u_plus_error}--\eqref{eq:u_minus_error} at $\xi=\xi_j$, and using equivalence relationships \eqref{eq:O_equivalence} and $r_j=O(\kappa^{-1})$, the end point traces $u_{j,\pm}$ and $\dot{u}_{j,\pm}$, and therefore, the transfer matrix $T_j$, defined in \eqref{eq:transfer_matrix}, yields
\begin{equation}\label{eq:transfer_matrix_error}
  T_j-\tilde{T}_j =
  \frac12\,\kappa^{-1}\, e^{\kappa\tilde{\ell}_j}
  \begin{pmatrix}
    O(1) & O(\kappa^{-1})
    \\[1pt]
    O(\kappa^{-1}) & O(1)
  \end{pmatrix},
\end{equation}
wherein, we have defined
\begin{equation}\label{eq:transfer_matrix_tilde}
  \tilde{T}_j :=
  \begin{pmatrix}
    \tilde{u}_{j,+}(\xi_j) & \tilde{u}_{j,-}(\xi_j) \\[1pt]
    \dot{\tilde{u}}_{j,+}(\xi_j) & \dot{\tilde{u}}_{j,-}(\xi_j)
  \end{pmatrix}
  =
  \frac{1}{2}e^{\kappa\tilde{\ell}_j}
  \begin{pmatrix}
    1+e^{-2\kappa\tilde{\ell}_j}
      & \kappa^{-1}\big(1-e^{-2\kappa\tilde{\ell}_j}\big)
    \\[3pt]
    \kappa\big(1-e^{-2\kappa\tilde{\ell}_j}\big)
      & 1+e^{-2\kappa\tilde{\ell}_j}
  \end{pmatrix},
\end{equation}
elaborated according to \eqref{eq:u_h_def} and \eqref{eq:u_h_prime_def}. Note that the constant coefficients of the asymptotic entries in \eqref{eq:transfer_matrix_error} are arbitrary by definition.
Now, using \eqref{eq:transfer_matrix_tilde} in \eqref{eq:transfer_matrix_error}, and the fact that $e^{-2\kappa\tilde\ell_j}=O(\kappa^{-1})$, one immediately obtains the following asymptotic relationship for the transfer matrix:
\begin{equation}\label{eq:transfer_matrix_asymp}
  T_j =
  \frac12 e^{\kappa\tilde{\ell}_j}
  \begin{pmatrix}
    1+O(\kappa^{-1}) &
    \kappa^{-1}\bigl(1+O(\kappa^{-1})\bigr)
    \\[3pt]
    \kappa\bigl(1+O(\kappa^{-1})\bigr) &
    1+O(\kappa^{-1})
  \end{pmatrix},
\end{equation}
which can be more compactly written as
\begin{equation}\label{eq:Tj_asymp}
  T_j = e^{\kappa \tilde\ell_j} \Big(S + \kappa^{-1}O(S)\Big),\qquad
    S :=   
  \frac12 \begin{pmatrix}
    1 & \kappa^{-1}\\[1pt]
    \kappa & 1
  \end{pmatrix}.
\end{equation}
Note that, the asymptotic matrix class $O(S)$, is closed under addition and multiplication, i.e., if $M_1=O(S)$ and $M_2=O(S)$ are two matrices in this class, then $M_1+M_2=O(S)$, $M_1\, M_2=O(S)$, and $M_2\, M_1=O(S)$. Moreover, if $g(\kappa)=O(\kappa^c)$ is a scalar function for $c\in\mathbb{R}$, then $g\, M_i=\kappa^c\,O(S)$ for $i=1,2$. We can sum up these properties as follows:
\begin{equation}\label{eq:O_products}
O(S)+O(S)=O(S),\qquad O(S)^2=O(S),\qquad O(\kappa^c)\,O(S)=\kappa^c\,O(S).
\end{equation}
Moreover, define $S_z$ for $z\in\mathbb{C}\!\setminus\!\{0\}$
\begin{equation}\label{eq:Sz_def}
 S_z := \operatorname{diag}(z,z^{-1})S,
\end{equation}
noting that $S_1=S$ and $S_z=O(S)$.
Also note for $z_1,z_2\in\mathbb{C}\!\setminus\!\{0\}$
\begin{equation}\label{eq:S_products}
  S_{z_1}\,S_{z_2} = \gamma_{z_2}\,S_{z_1},\qquad \gamma_z := \operatorname{tr}\,S_z=\frac12(z + z^{-1}).
\end{equation}
Given the above properties and recalling that $\alpha_j\neq0$ in \eqref{eq:alpha_beta_def}, we evaluate the product of $J_j$ from \eqref{eq:J_explicit}, by $T_j$ from \eqref{eq:Tj_asymp}. Noting that $J_j\,S=S_{\alpha_j}+\kappa^{-1}O(S)$ and $J_j\,O(S)=O(S)$, one obtains 
\begin{equation}\label{eq:JT_mat}
  J_j\,T_j = e^{\kappa \tilde\ell_j} \Big(S_{\alpha_j} + \kappa^{-1}O(S)\Big).
\end{equation}

Now, we can compute the monodromy matrix of \eqref{eq:monodromy} asymptotically for $m\ge 2$ using $J_j\,T_j$ terms obtained above and \eqref{eq:O_products}--\eqref{eq:S_products} for simplification. Recalling that $\tilde\ell=\sum_{j=1}^{m}\tilde\ell_j$ and denoting $n_\varepsilon:=\sum_{j=0}^{m-1}\varepsilon_j$ for a sequence $\{\varepsilon_j\}_{j=0}^{m-1}$, one may write

\begin{align}\label{eq:M_asymp_derivation}
  e^{-\kappa\tilde\ell}{M} 
  &= \prod\limits_{j=0}^{m-1} \Big( S_{\alpha_{m-j}} + \kappa^{-1} O(S) \Big)
  = \sum\limits_{\substack{\varepsilon_j\in\{0,1\}}} 
      \;\prod\limits_{j=0}^{m-1}\big(S_{\alpha_{m-j}}\big)^{1-\varepsilon_j}\,\big(\kappa^{-1}O(S)\big)^{\varepsilon_j} \nonumber \\
  &= \prod\limits_{j=0}^{m-1} S_{\alpha_{m-j}} 
    + \sum\limits_{n_\varepsilon>0 } 
       \;\prod\limits_{j=0}^{m-1}\big(S_{\alpha_{m-j}}\big)^{1-\varepsilon_j}\,\big(\kappa^{-1}O(S)\big)^{\varepsilon_j} \nonumber \\    
  &=  \prod\limits_{j=0}^{m-1} S_{\alpha_{m-j}} 
    + \sum\limits_{n_\varepsilon>0 } 
       O(S)^{m-{n_\varepsilon}}\,\kappa^{-n_\varepsilon}\,O(S)^{n_\varepsilon} \nonumber \\    
 &=  \prod\limits_{j=0}^{m-1} S_{\alpha_{m-j}} 
    + \big(m\kappa^{-1}+O(\kappa^{-2})\big)\,O(S)^m \nonumber \\ 
  &= \Big(\prod_{j=1}^{m-1} \gamma_{\alpha_j}\Big) S_{\alpha_m} + \kappa^{-1} O(S).
\end{align}
The final result can be more compactly expressed as
\begin{equation}\label{eq:M_asymptotic}
  {M} = e^{\kappa\tilde\ell}\Big(\Gamma_{m-1}\,S_{\alpha_{m}} + \kappa^{-1} O(S)\Big),
\end{equation}
where we define
\begin{equation}\label{eq:Gamma_def}
  \Gamma_0 := 1,\qquad\Gamma_k := \prod\limits_{j=1}^k \gamma_{\alpha_j},\quad k\in\mathbb{N}_m.
\end{equation}
For $m=1$ the same expression follows directly from \eqref{eq:JT_mat}, since ${M}=J_1T_1$ and $\Gamma_0=1$.
Now, it follows immediately from \eqref{eq:M_asymptotic} that
\begin{align}\label{eq:tr_M_asymp}
  \operatorname{tr} {M}
  &= e^{\kappa\tilde\ell}\Big(\Gamma_{m-1}\,\operatorname{tr}\,S_{\alpha_{m}}+ O(\kappa^{-1})\Big) \nonumber\\
  &= e^{\kappa\tilde\ell}\Big(\Gamma_{m-1}\,\gamma_{\alpha_m} + O(\kappa^{-1})\Big)\nonumber\\
  &= e^{\kappa\tilde\ell}\Big(\Gamma_m + O(\kappa^{-1})\Big).
\end{align}

We note that $\Gamma_m \neq 0$, since $\gamma_{\alpha_j} = \tfrac12(\alpha_j + \alpha_j^{-1}) \neq 0$ for each $j\in\mathbb{N}_m$. The latter is followed by the fact that $\alpha_j\neq \pm i$ or equivalently $\sigma(x_j^+)^2+\sigma(x_j^-)^2\neq 0$, which is the non-degenerate interface condition \eqref{eq:pr_interface_condition}. Now we conclude the argument. All estimates above presuppose \eqref{eq:rho_min}, which is satisfied for $\rho$ sufficiently large. It then follows from \eqref{eq:tr_M_asymp} that for sufficiently large $\rho$ there exists $c>0$ such that
\begin{equation}\label{eq:tr_M_ineq_exp}
  |\operatorname{tr}{M}| \ge c\, e^{\rho\tilde\ell}.
\end{equation}
If $\rho$ is large enough so that $|\operatorname{tr}{M}| > 2$, then \eqref{eq:D_final} yields $|\cos(\varphi)| > 1$, which is a contradiction. Consequently, $\rho$ must be bounded, provided that $|\nu|<\rho$. Equivalently, there exists $R_0>0$ such that every eigenvalue with $\operatorname{Re}(\lambda)<0$ satisfies $|\lambda|< R_0$. This completes the proof.

\end{proof}


\medskip
\subsection{Complementary Results}\label{subsec:complementary}
We conclude this section by presenting four additional results following from Theorem~\ref{theorem:main}. Proposition~\ref{proposition:bounds} converts the qualitative statement into an explicit bound on the left half-plane eigenvalues, Corollary~\ref{corollary:non-periodic} extends the result to two other boundary conditions, Remark~\ref{remark:finiteness} establishes the finiteness of such eigenvalues in \eqref{eq:SL_form}, and Remark~\ref{remark:constant_phase} treats the case of a coefficient ratio of constant phase.

\begin{proposition}[Explicit bound for left half-plane eigenvalues]
\label{proposition:bounds}
Under the assumptions of Theorem~\ref{theorem:main}, set
\begin{equation}\label{eq:E_constants}
\begin{aligned}
  \tilde{c} &:= \max_{j\in\mathbb{N}_m}\big\{|\alpha_j|,|\alpha_j|^{-1}\big\},
  \\[4pt]
  \tilde{d} &:= \max_{j\in\mathbb{N}_m}\bigg\{
    |\beta_j|
    + 2\max\big\{|\alpha_j|,|\alpha_j|^{-1},|\beta_j|/\rho_0\big\}
      \Big(4\tilde\ell_j V_j + \tfrac{1}{\sqrt{2}\,e\,\tilde\ell_j}\Big)
  \bigg\},
\end{aligned}
\end{equation}
where $\rho_0$, $V_j$, $\tilde\ell_j$, $\tilde\ell$, $\alpha_j$, $\beta_j$, and $\Gamma_m$ are as in the proof of Theorem~\ref{theorem:main}. Then every left half-plane eigenvalue $\lambda=-\kappa^2$ satisfies
\begin{equation}\label{eq:E_kappa_bound}
  |\kappa| \le \sqrt{2}\,
  \max\bigg\{
    \rho_0,
    \quad
    \frac{2\rho_0}{|\Gamma_m|}
    \Big[\big(\tilde{c}+\tilde{d}/\rho_0\big)^{m}-\tilde{c}^{\,m}\Big],
    \quad
    \frac{1}{\tilde\ell}\ln\frac{4}{|\Gamma_m|}
  \bigg\}.
\end{equation}
\end{proposition}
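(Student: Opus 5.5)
The plan is to redo Step 3 of the proof of Theorem~\ref{theorem:main} with every $O(\cdot)$ symbol replaced by an explicit constant, and then read off the threshold for $\rho$ beyond which $|\operatorname{tr}M|>2$. Throughout, $\rho\ge\rho_0$, so that \eqref{eq:M_bound} and \eqref{eq:u_plus_error}--\eqref{eq:u_minus_error} hold with $r_j\le \tilde\ell_jV_j/\rho\le 1/2$. The three terms in the maximum in \eqref{eq:E_kappa_bound} are meant to correspond to three requirements:
\begin{enumerate}
\item the a priori estimates are valid, $\rho\ge\rho_0$;
\item the accumulated perturbation in the product $\prod J_jT_j$ is at most $|\Gamma_m|/2$;
\item $e^{\rho\tilde\ell}|\Gamma_m|/4>2$, or a similar final inequality.
\end{enumerate}
Since $|\kappa|<\sqrt2\rho$, the prefactor $\sqrt2$ converts a bound on $\rho$ into one on $|\kappa|$.

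\textbf{First step.} Write $J_jT_j=e^{\kappa\tilde\ell_j}(S_{\alpha_j}+E_j)$ and bound $E_j$ in a norm adapted to $S$. The natural choice is the weighted norm $\|A\|_*:=\|D^{-1}AD\|$ with $D=\operatorname{diag}(1,\kappa)$. In this norm $S$ becomes $\frac12\begin{pmatrix}1&1\\1&1\end{pmatrix}$, the $O(S)$ class becomes bounded matrices, and $\|S_z\|_*\le\max\{|z|,|z|^{-1}\}$ up to the normalization constant.

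The error $E_j$ has two sources:
\begin{itemize}
\item the transfer error from \eqref{eq:u_plus_error}--\eqref{eq:u_minus_error}, which contributes $2r_j\cdot 2\le 4\tilde\ell_jV_j/\rho$ per entry after the weighting;
\item the exponentially small terms $e^{-2\kappa\tilde\ell_j}$ in \eqref{eq:transfer_matrix_tilde}, which are bounded using $\sup_{t>0} t e^{-2t\tilde\ell_j}$-type estimates. This is where $1/(\sqrt2 e\tilde\ell_j)$ should come from: $e^{-2\rho\tilde\ell_j}\le 1/(2e\rho\tilde\ell_j)$, with a $\sqrt2$ absorbed from $|\kappa|$ versus $\rho$.
\end{itemize}
Multiplying by $J_j$ costs a factor $\max\{|\alpha_j|,|\alpha_j|^{-1},|\beta_j|/\rho\}$ in the weighted norm. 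The $\beta_j$ entry becomes $\beta_j/\kappa$ after weighting and is bounded using $\rho\ge\rho_0$. The lower-left $\beta_j$ acting on the leading $S$ also produces an extra $|\beta_j|/\rho$ perturbation, which gives the standalone $|\beta_j|$ term. The result is $\|S_{\alpha_j}\|_*\le\tilde c$ and $\|E_j\|_*\le\tilde d/\rho$.

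\textbf{Second step.} Expand the product as in \eqref{eq:M_asymp_derivation}, but exactly:
\[
\Big\|\prod_j (S_{\alpha_j}+E_j)-\prod_j S_{\alpha_j}\Big\|_*\le(\tilde c+\tilde d/\rho)^m-\tilde c^{\,m}.
\]
This follows from the binomial-type bound using submultiplicativity. The right-hand side is convex in $1/\rho$ and vanishes at $1/\rho=0$. For $\rho\ge\rho_0$ it is therefore at most $(\rho_0/\rho)[(\tilde c+\tilde d/\rho_0)^m-\tilde c^m]$.

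Next use \eqref{eq:S_products}, $\prod S_{\alpha_j}=\Gamma_{m-1}S_{\alpha_m}$, whose trace is $\Gamma_m$. Since the trace is bounded by $2$ times the weighted norm (the trace is invariant under conjugation by $D$), this gives
\[
|e^{-\kappa\tilde\ell}\operatorname{tr}M-\Gamma_m|\le \frac{2\rho_0}{\rho}\big[(\tilde c+\tilde d/\rho_0)^m-\tilde c^m\big].
\]
The right-hand side is at most $|\Gamma_m|/2$ once $\rho$ exceeds the second term of \eqref{eq:E_kappa_bound}, possibly with a factor-of-two adjustment.

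\textbf{Third step.} Conclude that $|\operatorname{tr}M|\ge e^{\rho\tilde\ell}|\Gamma_m|/2$. This exceeds $2$ when $\rho>\tilde\ell^{-1}\ln(4/|\Gamma_m|)$, which contradicts \eqref{eq:D_final}.

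\textbf{Main obstacle.} I expect the hardest part to be the first step: matching the exact constants in $\tilde d$. This requires carefully tracking how the hyperbolic remainder terms and the $\beta_j$ entry of $J_j$ interact with the weighting, and choosing the matrix norm so that the trace and norm inequalities have precisely the factors $2$ used in the statement. If the constants do not come out exactly, the structure of the argument is unchanged and only the numerical factors need adjusting.
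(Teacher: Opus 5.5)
Your route is the paper's own. Its class $E[t_0]$ is precisely your weighted norm with $D=\operatorname{diag}(1,\kappa)$, normalized as $\|A\|_*:=2\max_{i,j}|(D^{-1}AD)_{ij}|$, which is the class bound $t_0$. Your first two steps reproduce its constants $c_j$, $d_j$ and $\rho_{\max}=\rho_0[(\tilde c+\tilde d/\rho_0)^m-\tilde c^{\,m}]$; this includes the convexity argument in $1/\rho$, which is equivalent to the paper's $|\kappa|^{-(n-1)}\le\rho_0^{-(n-1)}$.

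What is missing is the constant bookkeeping you defer, and as written it gives the wrong constant. With your trace inequality $|\operatorname{tr}A|\le 2\|A\|_*$ you get $|e^{-\kappa\tilde\ell}\operatorname{tr}M-\Gamma_m|\le 2\rho_{\max}/\rho$. That is at most $|\Gamma_m|/2$ only when $\rho\ge 4\rho_{\max}/|\Gamma_m|$, which is twice the second entry of the maximum in \eqref{eq:E_kappa_bound}. So you prove a strictly weaker proposition. Your constants are also tied to one normalization. The factor $2\max\{|\alpha_j|,|\alpha_j|^{-1},|\beta_j|/\rho_0\}$ in $\tilde d$ is the size of $J_j$ in the max-entry norm above, not ``a factor $\max\{\cdots\}$''. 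In that norm the trace inequality has no factor $2$.

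The fix is to commit to $\|A\|_*:=2\max_{i,j}|(D^{-1}AD)_{ij}|$.
\begin{itemize}
\item It is submultiplicative on $2\times2$ matrices, since $|(AB)_{ij}|\le 2\max_{k,l}|a_{kl}|\max_{k,l}|b_{kl}|$, and conjugation by $D$ respects products.
\item It gives $\|S_{\alpha_j}\|_*=c_j$, and $\|J_j\|_*\le b_j$ for $|\kappa|\ge\rho_0$.
\item It gives $\|J_jS-S_{\alpha_j}\|_*=|\beta_j|/|\kappa|$ and $\|e^{-\kappa\tilde\ell_j}(T_j-\tilde T_j)\|_*\le 4\tilde\ell_jV_j/|\kappa|$.
\item It gives $\|e^{-\kappa\tilde\ell_j}\tilde T_j-S\|_*=e^{-2\rho\tilde\ell_j}\le 1/(\sqrt2\,e\,\tilde\ell_j|\kappa|)$.
\item Crucially, $|\operatorname{tr}A|\le|(D^{-1}AD)_{11}|+|(D^{-1}AD)_{22}|\le\|A\|_*$, with no factor $2$.
\end{itemize}
Then $|\operatorname{tr}M|\ge e^{\rho\tilde\ell}(|\Gamma_m|-\rho_{\max}/\rho)$. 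The stated thresholds $2\rho_{\max}/|\Gamma_m|$ and $\tilde\ell^{-1}\ln(4/|\Gamma_m|)$ then give $|\operatorname{tr}M|>2$ exactly as in your third step.
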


\noindent
The bound \eqref{eq:E_kappa_bound} is not optimised; it serves to show that the exclusion region of Theorem~\ref{theorem:main} is explicitly computable from the coefficients and the interface data. In the case $w=p$ one has $r\equiv1$, so that $\tilde\ell_j=\ell_j$ and $\tilde\ell=\ell$.
\begin{proof}
For a constant $t_0>0$, let $E[t_0]$ denote the class of $2\times 2$ matrices
\begin{equation}\label{eq:E_def}
  E[t_0] :=
  \left\{
  \begin{pmatrix}
    e_{11} & \kappa^{-1}e_{12}\\[1pt]
    \kappa e_{21} & e_{22}
  \end{pmatrix}
  :\ \sup_{\kappa}|e_{ij}(\kappa)|<\frac{t_0}{2},\quad i,j=1,2 
  \right\}.
\end{equation}
We refer to $t_0$ as the class bound. For a given matrix $M_0$, we write $M_0=E[t_0]$ to indicate that $M_0$ belongs to this class; hence, the equality sign is understood in the sense of \emph{class membership}. For a matrix $M_0=E[t_0]$, the trace is bounded by the class bound, i.e., $|\operatorname{tr}M_0|\le t_0$. The class is monotone in the bound: if $M_0=E[t_1]$ and $t_1\le t_2$, then $M_0=E[t_2]$; that is, the class bound may be increased but not decreased. If $M_1=E[t_1]$ and $M_2=E[t_2]$, then $M_1+M_2=E[t_1+t_2]$, $M_1M_2=E[t_1t_2]$, and $M_2M_1=E[t_1t_2]$. Moreover, for a scalar $z\in\mathbb{C}$ one has $zE[t_1]=E[|z|\,t_1]$. In particular, since \eqref{eq:rho_min} assumes $|\kappa|\ge\rho\ge\rho_0$, we have $\kappa^{-n}E[t_1]=E[\rho_0^{-n}t_1]$. Given these properties, we may manipulate the class members according to the following rules:
\begin{equation}\label{eq:E_properties}
\begin{aligned}
  &E[t_1]=E[t_2] \ \ (t_1\le t_2),
  \qquad
  zE[t_1]=E[|z|\,t_1],
  \qquad
  \big|\operatorname{tr}E[t_1]\big|\le t_1,
  \\[4pt]
  &E[t_1]+E[t_2]=E[t_1+t_2],
  \qquad
  E[t_1]E[t_2]=E[t_1t_2].
\end{aligned}
\end{equation}

The idea will be to express the defined matrices $T_j$, $\tilde{T}_j$, $J_j$, $S_{\alpha_j}$ using the above estimation class. Recalling the definition of $r_j=\tilde{\ell}_jV_j/|\kappa|$ from \eqref{eq:pointwise_chain} and evaluating \eqref{eq:u_plus_error}--\eqref{eq:u_minus_error} at $\xi=\xi_j$, one may reformulate the right-hand side of \eqref{eq:transfer_matrix_error} as follows:
\begin{equation}\label{eq:E_Tj_error}
  T_j-\tilde{T}_j =
  e^{\kappa\tilde{\ell}_j}\,\kappa^{-1}\,E[4\, \tilde{\ell}_j\, V_j].
\end{equation}
Moreover, $\tilde{T}_j$ in \eqref{eq:transfer_matrix_tilde}, can be written as
\begin{equation}\label{eq:E_Tj_tilde}
  \tilde{T}_j = e^{\kappa \tilde\ell_j}\, \Big(S + \kappa^{-1}E\big[1/(\sqrt{2}\, e\, \tilde{\ell}_j)\big]\Big),
\end{equation}
where $S$ is defined in \eqref{eq:Tj_asymp}. The class bound of the residual term in \eqref{eq:E_Tj_tilde} stems from the fact that $\tfrac12\big|\kappa\,e^{-2\kappa\tilde{\ell}_j}\big| \le \tfrac{1}{\sqrt{2}}\,\rho\, e^{-2\rho\tilde{\ell}_j}\le 1/(2\sqrt{2}\,e\,\tilde{\ell}_j)$, for $\rho>0$. Now, using the combination properties described above, \eqref{eq:E_Tj_error}-\eqref{eq:E_Tj_tilde} immediately yield
\begin{equation}\label{eq:E_Tj}
  T_j = e^{\kappa \tilde\ell_j}\, \Big(S + \kappa^{-1}E\big[{a_j}\big]\Big),\qquad {a_j}:=4\,\tilde{\ell}_j\,V_j + 1/(\sqrt{2}\,e\,\tilde{\ell}_j).
\end{equation}
Now, recall from \eqref{eq:rho_min} that we assumed $|\kappa|\ge \rho \ge \rho_{0}$. Using this lower bound to match the estimation class matrix form \eqref{eq:E_def} with the definition $J_j$ in \eqref{eq:J_explicit}, one obtains
\begin{equation}\label{eq:E_J}
  J_j = E\big[{b_j}\big],\qquad {b_j}:= 2 \max\big\{|\alpha_j|,|\alpha_j|^{-1},|\beta_j|/\rho_{0}\big\}.
\end{equation}
Moreover, using $S_{\alpha_j}$ as defined in \eqref{eq:Sz_def} and appearing in \eqref{eq:JT_mat}, one can simply verify
\begin{equation}\label{eq:E_JS}
  J_j S = S_{\alpha_j} + \kappa^{-1}E\big[|\beta_j|\big].
\end{equation}
Finally, expanding $S_{\alpha_j}$ itself and comparing it with \eqref{eq:E_def} yield
\begin{equation}\label{eq:E_Sa}
  S_{\alpha_j} = E[{c_j}],\qquad {c_j}:= \max\big\{|\alpha_j|,|\alpha_j|^{-1}\big\}.
\end{equation}
Now, we use equations \eqref{eq:E_Tj}, \eqref{eq:E_J}, and \eqref{eq:E_JS} to write
\begin{align}\label{eq:E_JT}
  J_j T_j
  &= e^{\kappa \tilde\ell_j}\, \Big(J_j S + \kappa^{-1}J_j E\big[{a_j}\big]\Big) \nonumber\\
  &= e^{\kappa \tilde\ell_j}\, \Big(S_{\alpha_j} + \kappa^{-1}E\big[|\beta_j|\big] + \kappa^{-1}E\big[{b_j}\big] E\big[{a_j}\big]\Big) \nonumber\\
  &= e^{\kappa \tilde\ell_j}\, \Big(S_{\alpha_j} + \kappa^{-1}E[{d_j}] \Big),\qquad {d_j}:=|\beta_j| + {a_j}{b_j}.
\end{align}

Now, we can expand the matrix $M$ of \eqref{eq:monodromy} for $m\ge 2$ using $J_j\,T_j$ terms obtained above. Using the same notations used for \eqref{eq:M_asymp_derivation}, one may write
\begin{equation}\label{eq:E_M_expansion1}
   e^{-\kappa\tilde\ell}{M} 
  = \prod\limits_{j=0}^{m-1} \Big( S_{\alpha_{m-j}} + \kappa^{-1} E[{d_{m-j}}] \Big)
   = \prod\limits_{j=0}^{m-1} S_{\alpha_{m-j}} 
   + \sum\limits_{n_\varepsilon>0 } 
   \;\prod\limits_{j=0}^{m-1}\big(S_{\alpha_{m-j}}\big)^{1-\varepsilon_j}\,\big(\kappa^{-1}E[{d_{m-j}}]\big)^{\varepsilon_j}.
\end{equation}
Simplifying the first term on the right-hand side of \eqref{eq:E_M_expansion1} from \eqref{eq:M_asymp_derivation} and rewriting the second term using \eqref{eq:E_Sa} yields
\begin{equation}\label{eq:E_M_expansion2}
  e^{-\kappa\tilde\ell}{M}
   = \Gamma_{m-1}\,S_{\alpha_{m}}
   + \sum\limits_{ n_\varepsilon>0 } 
   \;\prod\limits_{j=0}^{m-1} E[{c_{m-j}}]^{1-\varepsilon_j}\,\big(\kappa^{-1}E[{d_{m-j}}]\big)^{\varepsilon_j}.
\end{equation}
Now, we rearrange \eqref{eq:E_M_expansion2}, note that $\tilde{c}=\max_j \{c_j\}$ and $\tilde{d}=\max_j \{d_j\}$ from \eqref{eq:E_constants}, and use the simplification rules of \eqref{eq:E_properties} together with $|\kappa|\ge\rho_0$ to write
\begin{align}\label{eq:E_M_asymp}
  e^{-\kappa\tilde\ell}{M} - \Gamma_{m-1}\,S_{\alpha_{m}}
   &= \sum\limits_{n_\varepsilon>0 } 
   \;\prod\limits_{j=0}^{m-1} E[\tilde{c}]^{1-\varepsilon_j}\,\big(\kappa^{-1}E[\tilde{d}]\big)^{\varepsilon_j} \nonumber \\
   &= \sum\limits_{n=1}^{m} \binom{m}{n} \kappa^{-n}\, E[\tilde{c}]^{m-n}\,E[\tilde{d}]^{n} \nonumber \\  
   &= \kappa^{-1}\rho_{0}\,\sum\limits_{n=1}^{m} \binom{m}{n} \rho_{0}^{-n}\, E[\tilde{c}]^{m-n}\,E[\tilde{d}]^{n} \nonumber \\ 
   &= \kappa^{-1}\, E\bigg[\rho_{0}\sum\limits_{n=1}^{m} \binom{m}{n} {\tilde{c}}\,^{m-n}\,(\tilde{d}/\rho_0)^{n}\bigg] \nonumber \\   
   &= \kappa^{-1}\, E\Big[\rho_{0}\big(\tilde{c}+\tilde{d} / \rho_{0}\big)^{m} - \rho_{0}{\tilde{c}}\,^m\Big],
\end{align}
which can be more compactly put as
\begin{equation}\label{eq:E_M_compact}
  {M} = e^{\kappa\tilde\ell}\Big(\Gamma_{m-1}\,S_{\alpha_{m}} + \kappa^{-1}\, E[\rho_{\max}] \Big),\qquad \rho_{\max}:=\rho_{0}\big(\tilde{c}+\tilde{d} / \rho_{0}\big)^{m} - \rho_{0}{\tilde{c}}\,^m.
\end{equation}
Taking the trace of both sides, noting $\operatorname{tr}(\Gamma_{m-1}S_{\alpha_m})=\Gamma_m$ and $|e^{\kappa\tilde\ell}|=e^{\rho\tilde\ell}$, and using the triangle inequality yields
\begin{equation}\label{eq:E_M_trace}
  |\operatorname{tr} M| \ge e^{\rho\tilde\ell}\Big(\big|\Gamma_{m}\big| - |\kappa|^{-1}\,\big|\operatorname{tr} E[\rho_{\max}]\big| \Big)
  \ge e^{\rho\tilde\ell}\Big(\big|\Gamma_{m}\big| - \frac{\rho_{\max}}{\rho} \Big).
\end{equation}
If $\rho$ satisfies
\begin{equation}\label{eq:E_rho_bound}
  \rho >
  \max\bigg\{
    \frac{2\rho_{\max}}{|\Gamma_m|},
    \frac{1}{\tilde{\ell}}\ln\frac{4}{|\Gamma_m|},
    \rho_0
  \bigg\},
\end{equation}
then $|\operatorname{tr}M|>2$, which contradicts \eqref{eq:D_final}. Hence $\rho$ is bounded by the right-hand side of \eqref{eq:E_rho_bound}. Since $\rho<|\kappa|<\sqrt{2}\rho$, the corresponding bound on $|\kappa|$ is given by \eqref{eq:E_kappa_bound}.

\end{proof}

\begin{corollary}[Dirichlet and Neumann boundary conditions]\label{corollary:non-periodic}
Under the assumptions of Theorem~\ref{theorem:main}, with the pseudo-periodic boundary condition \eqref{eq:Y_pp_BC} replaced by a Dirichlet or Neumann condition, the left half-plane eigenvalues remain bounded.
\end{corollary}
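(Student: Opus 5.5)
The plan is to reuse Steps~1--3 of the proof of Theorem~\ref{theorem:main} unchanged, with one modification: the endpoints $a,b$ are no longer identified, so the last jump $J_m$ is dropped. Let
\[
M' := T_m\,J_{m-1}T_{m-1}\cdots J_1T_1,
\]
which propagates the Liouville state vector from $U(0^+)$ to $U(\tilde\ell^{\,-})$. No periodic extension of $R$ is needed, and neither is the interface condition \eqref{eq:pr_interface_condition} at $x_m$; only the interior interfaces $j\in\mathbb{N}_{m-1}$ enter.

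\textbf{Boundary conditions in Liouville variables.} By \eqref{eq:R_def}, $y=\sigma^{-1}u$ and $py'=\sigma\dot u-(\sigma'/r)u$.
\begin{itemize}
\item The Dirichlet condition $y(a)=y(b)=0$ becomes $u(0^+)=u(\tilde\ell^{\,-})=0$. Taking $U(0^+)=(0,1)^T$, the characteristic equation is $M'_{12}=0$.
\item The Neumann condition $py'(a)=py'(b)=0$ becomes $\dot u=c\,u$ at each end, with $c_a:=(\sigma'/(r\sigma))(a^+)$ and $c_b:=(\sigma'/(r\sigma))(b^-)$; both are finite by the regularity in \eqref{eq:pw_regularity}. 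Taking $U(0^+)=(1,c_a)^T$, the characteristic equation is
\[
(-c_b,\;1)\,M'\,(1,\;c_a)^T=M'_{21}+c_aM'_{22}-c_bM'_{11}-c_ac_bM'_{12}=0 .
\]
\end{itemize}

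\textbf{Asymptotics of $M'$.} Assume $\rho$ is large and $|\nu|<\rho$. Use \eqref{eq:Tj_asymp} and \eqref{eq:JT_mat} for each factor, and expand the product exactly as in \eqref{eq:M_asymp_derivation}. The leading term is $e^{\kappa\tilde\ell}\,S\,S_{\alpha_{m-1}}\cdots S_{\alpha_1}$. Applying \eqref{eq:S_products} repeatedly from the left gives $S\,S_{z}=\gamma_z S$, so
\[
M'=e^{\kappa\tilde\ell}\Big(\Gamma_{m-1}\,S+\kappa^{-1}O(S)\Big).
\]
For $m=1$, simply $M'=T_1$ and $\Gamma_0=1$. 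The interior condition \eqref{eq:pr_interface_condition} gives $\Gamma_{m-1}\neq0$, as in the proof of the theorem.

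\textbf{Reading off the entries.} Because the error is entrywise $O(S)$:
\begin{itemize}
\item $M'_{12}=e^{\kappa\tilde\ell}\kappa^{-1}\big(\tfrac12\Gamma_{m-1}+O(\kappa^{-1})\big)$, hence $|M'_{12}|\ge c\,\rho^{-1}e^{\rho\tilde\ell}>0$ for $\rho$ large. This excludes Dirichlet eigenvalues.
\item $M'_{21}=e^{\kappa\tilde\ell}\kappa\big(\tfrac12\Gamma_{m-1}+O(\kappa^{-1})\big)$, while $M'_{11},M'_{22}=O(e^{\rho\tilde\ell})$ and $M'_{12}=O(\rho^{-1}e^{\rho\tilde\ell})$. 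The Neumann characteristic function is therefore $e^{\kappa\tilde\ell}\kappa\big(\tfrac12\Gamma_{m-1}+O(\kappa^{-1})\big)$, which is nonzero for large $\rho$.
\end{itemize}
In both cases $\rho$ is bounded, and so $|\lambda|$ is bounded on the left half-plane. Mixed conditions $y(a)=0$, $py'(b)=0$ (and the reverse) follow identically, with leading entries $M'_{22}$ and $M'_{11}$ respectively. If desired, the estimation class $E[t_0]$ of Proposition~\ref{proposition:bounds} turns each step into an explicit bound.

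\textbf{Main obstacle.} The delicate point is ensuring that the $\kappa^{-1}O(S)$ remainder is genuinely subleading in the specific entry used. An error estimate with only a uniform norm bound would fail for $M'_{12}$, since the leading term is itself only of size $\kappa^{-1}$. I will therefore track the entrywise scaling $\begin{pmatrix}1&\kappa^{-1}\\ \kappa&1\end{pmatrix}$ through the product. Closure of $O(S)$ under products, \eqref{eq:O_products}, guarantees this, but I will check it explicitly for the entries $(1,2)$ and $(2,1)$.
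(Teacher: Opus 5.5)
Your proposal is correct and follows essentially the same route as the paper: the non-periodic product $T_mJ_{m-1}T_{m-1}\cdots J_1T_1$, its entrywise $O(S)$ asymptotics with nonvanishing leading coefficient $\Gamma_{m-1}$, and exponential dominance of the $(1,2)$ entry for Dirichlet and the $(2,1)$ entry for Neumann. The only difference is bookkeeping: the paper passes through the extended monodromy $M=J_m\widetilde M$ and the determinant $\det(A+BM)$, which agrees with the true characteristic function $\det\bigl(AR(a^+)+BR(a^+)M\bigr)$ only up to a nonzero constant factor and, for Neumann, subleading $O(e^{\rho\tilde\ell})$ terms; your direct use of $M'$ with the explicit endpoint coefficients $c_a,c_b$ keeps those terms and shows they are harmless.
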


\begin{proof}
For classical boundary conditions such as Dirichlet or Neumann, the problem is posed on $\Omega=[a,b]$ without periodic continuation, so only the left trace at $b$ is defined, equivalently the left trace at $\xi(b)=\tilde\ell$ in the transformed variable. 
In this case the non-periodic global transfer matrix is 
\begin{equation}\label{eq:monodromy_truncated}
   \widetilde{M} := T_m \, J_{m-1} T_{m-1} \cdots J_1 T_1, \qquad  U(\tilde\ell^{\,-}) = \widetilde{M}\, U(0^+).
\end{equation}
Let the general linear boundary conditions be
\begin{equation}\label{eq:Y_BC_non-periodic}
  A\,Y(a^+) + B\,Y(b^-)=0,
\end{equation}
where $A,B\in\mathbb{C}^{2\times2}$ and $\operatorname{rank} (A| B)=2$. Using the relationship \eqref{eq:R_def}, the boundary conditions in terms of $U$ become
\begin{equation}\label{eq:U_BC_non-periodic}
A\,R(a^+)\,U(0^+) +  B\,R(b^-)\,U(\tilde\ell^{\,-})=0.
\end{equation}
If we use the facts that $\widetilde{M} = J_m^{-1} {M}$ with the extended definition $J_m = R(a^+)^{-1} R(b^-)$, and that $\det R=1$, then the characteristic equation governing the eigenvalues can be expressed from \eqref{eq:U_BC_non-periodic} in terms of the extended monodromy matrix ${M}$ which we have already studied asymptotically
\begin{equation}\label{eq:char_non-periodic}
  \det\big(\widetilde{\Delta}\big)=0,\qquad \widetilde{\Delta}:=A+B\,{M}.
\end{equation}

\medskip
\noindent\textbf{Dirichlet boundary condition.}
For this case, the endpoint boundary conditions correspond to the matrix coefficients as follows:
 \begin{equation}\label{eq:Dirichlet_AB}
    y(a)=y(b)=0
    \quad\Longleftrightarrow\quad
    A=\begin{pmatrix}1&0\\0&0\end{pmatrix},\quad 
    B=\begin{pmatrix}0&0\\1&0\end{pmatrix}.
 \end{equation}
Using the above matrix coefficients to obtain $\widetilde{\Delta}$ from \eqref{eq:char_non-periodic}, gives
 \begin{equation}\label{eq:Dirichlet_Delta}
    \widetilde{\Delta} = 
    \begin{pmatrix}
    1 & 0 \\[5pt]
    {c_0 e^{\kappa \tilde\ell}}\, \big(1 + O(\kappa^{-1}) \big) &
    {c_0 e^{\kappa \tilde\ell}}\kappa^{-1}\, \big(1 + O(\kappa^{-1}) \big)
    \end{pmatrix},
 \end{equation}
where $c_0:=\tfrac12\alpha_m\Gamma_{m-1}$. Hence, the characteristic equation becomes
 \begin{equation}\label{eq:Dirichlet_det}
|\det(\widetilde\Delta)| = |c_0| e^{\rho\tilde\ell} |\kappa|^{-1}(1+O(\kappa^{-1})),
 \end{equation}
in which the dominant term grows exponentially in size, and hence, the determinant cannot vanish as $\rho\to +\infty$. 
  
\medskip
\noindent\textbf{Neumann boundary condition.}
Similar to the Dirichlet case, the matrix coefficients are obtained from the endpoint boundary conditions
 \begin{equation}\label{eq:Neumann_AB}
    y'(a)=y'(b)=0
    \quad\Longleftrightarrow\quad
    A=\begin{pmatrix}0&1\\0&0\end{pmatrix},\quad 
    B=\begin{pmatrix}0&0\\0&1\end{pmatrix},
 \end{equation}
which yields from \eqref{eq:char_non-periodic}
 \begin{equation}\label{eq:Neumann_Delta}
    \widetilde{\Delta} = 
    \begin{pmatrix}
    0 & 1 \\[5pt]
    {c'_0 e^{\kappa \tilde\ell}} \kappa\, \big(1 + O(\kappa^{-1}) \big) & {c'_0 e^{\kappa \tilde\ell}}\,\big(1 + O(\kappa^{-1}) \big)
    \end{pmatrix},
 \end{equation}
where $c'_0:=\tfrac12\alpha_m^{-1}\Gamma_{m-1}$. Taking the determinant then gives
 \begin{equation}\label{eq:Neumann_det}
    |\det\big(\widetilde{\Delta}\big)| = |c'_0|\, e^{\rho \tilde\ell}\, |\kappa|\, \big(1 + O(\kappa^{-1})\big).
 \end{equation}
Similarly, as $\rho\to +\infty$, the characteristic determinant cannot vanish due to the existence of an exponential dominant term. This concludes a similar left half-plane boundedness result for these boundary conditions.

\end{proof}

\begin{remark}[Finiteness of eigenvalues in the left half-plane]\label{remark:finiteness}
The spectral classification of regular two-point SL boundary value problems given in \cite[Lemma 3.2.4]{Zettl2005Book} implies that the eigenvalue set is either finite or countably infinite without finite accumulation points in $\mathbb{C}$, except in the degenerate case where every complex number is an eigenvalue. In the present setting, Theorem~\ref{theorem:main} ensures that all eigenvalues with $\operatorname{Re}(\lambda)\!<\!0$  are contained in a bounded subset of $\mathbb{C}$. The degenerate case is excluded here, since \eqref{eq:tr_M_ineq_exp} shows that $\operatorname{tr}{M}-2\cos(\varphi)$ is unbounded and therefore not identically zero. Since no finite accumulation points occur, it follows that only finitely many eigenvalues can lie in the open left half-plane.
\end{remark}

\begin{remark}[Coefficient ratio of constant phase]\label{remark:constant_phase}
Suppose that \eqref{eq:ratio_positive} is replaced by the more general requirement that $w/p$ have constant argument on $\Omega$, that is,
\begin{equation}\label{eq:const_phase}
\exists\, \theta\in\mathbb{R},\ \exists\, r>0 \quad : \quad \frac{w(x)}{p(x)} = e^{i\theta} r(x)^2, \qquad x\in \Omega .
\end{equation}
Writing $\hat w := e^{-i\theta} w$ and $\hat\lambda := e^{i\theta}\lambda$, the equation \eqref{eq:SL_form} becomes $-(p\,y')' + q\,y = \hat\lambda\,\hat w\,y$ with $\hat w/p = r^2$, so that Theorem~\ref{theorem:main} applies to $\hat\lambda$. Consequently the eigenvalues of \eqref{eq:SL_form} are bounded in the half-plane
\begin{equation}\label{eq:rotated_halfplane}
\operatorname{Re}\big(e^{i\theta}\lambda\big) < 0 ,
\end{equation}
which reduces to the open left half-plane when $\theta=0$. The positivity hypothesis \eqref{eq:ratio_positive} thus fixes which half-plane the conclusion concerns, rather than restricting the class of admissible coefficients. A phase varying with $x$, by contrast, cannot be removed by such a rescaling and lies outside the scope of the present analysis.
\end{remark}


\section{Numerical Illustration}
\label{sec:numerics}

We illustrate the Fourier--modal discretization of the TE and TM equations introduced in Section~\ref{sec:physical}. Throughout this section the spatial domain $\Omega$ denotes the unit cell of the grating, of length $\ell$, over which all periodic coefficients and Bloch–Floquet solutions are defined. 

\medskip
\subsection{Fourier Representation}\label{subsec:Fourier_representation}
For a pseudo-periodic field of the form
\begin{equation}
  f(x,z) = \tilde f(x)\, e^{-i\beta z},
\end{equation}
where $\tilde f$ satisfies the Bloch condition with phase $k_{x,0}$, we expand
\begin{equation}
  \tilde f(x)
  = \sum_{n=-\infty}^{\infty} f_n\, e^{- i k_{x,n} x},
  \qquad
  k_{x,n} := k_{x,0} + \frac{2\pi n}{\ell}.
\end{equation}
For a given truncation order~$N$, the associated vector of Fourier coefficients is defined by
\begin{equation}\label{eq:bracket-def}
  [f]_n := f_{\,n-N-1},
  \qquad n=1,\dots,2N+1.
\end{equation}
In the TE and TM formulations, the field components $E_y$ and $H_y$ are of this form.

If $g(x)$ is periodic on~$\Omega$, its Fourier series and coefficients are given by
\begin{equation}
  g(x)
  = \sum_{n=-\infty}^{\infty} g_n\, e^{- i n x (2\pi/\ell)},
  \qquad
  g_n = \frac{1}{\ell} \int_{\Omega}
        g(x)\, e^{\, i n x (2\pi/\ell)}\, dx,
\end{equation}
with $g=\epsilon(x)$ or $g=\epsilon(x)^{-1}$ in our applications.

\medskip
\subsection{Fourier Operators}\label{subsec:Fourier_operator}
To convert the differential equations into algebraic ones, we express differentiation and multiplication by periodic coefficients in terms of matrix operators acting on Fourier coefficient vectors.  
We begin by defining the diagonal matrix $K$ as
\begin{equation}
  K_{nn'} = \frac{k_{x,n}}{k_0}\,\delta_{nn'},
  \qquad
  k_0 = \frac{2\pi}{\lambda_{\text{vac}}}.
\end{equation}
From the Fourier representation and the bracket notation introduced in \eqref{eq:bracket-def}, differentiation of a pseudo-periodic function satisfies
\begin{equation}
  \Big[\,\frac{d\tilde f}{dx}\,\Big]
  = -\, i k_0\, K\, [f].
\end{equation}

Multiplication by a periodic coefficient $g(x)$ is represented by the Toeplitz matrix
\begin{equation}
  \llbracket g \rrbracket_{nn'} = g_{\,n-n'},
  \qquad n,n' = 1,\dots,2N+1,
\end{equation}
which yields the algebraic identity
\begin{equation}
  [\,g \tilde f\,] = \llbracket g \rrbracket\, [f].
\end{equation}

\medskip
\subsection{Modal Matrices for TE and TM Equations}\label{subsec:modal_matrices}
The modal matrices follow directly from the physical equations in Section~\ref{sec:physical}. For TE polarization, the governing scalar equation \eqref{eq:TE_basic_short} yields the Fourier–modal eigenvalue problem
\begin{equation}\label{eq:TE_modal}
  \big(\llbracket\epsilon\rrbracket - K^2\big)\,[E_y]
  = \eta^{\,2}\, [E_y],
  \qquad \eta^{\,2} := (\beta/k_0)^2,
\end{equation}
with $\eta$ denoting the normalized longitudinal propagation constant. Naturally, for truncation order $N$ there are $M=2N+1$ modes.

For TM polarization, the physical equation \eqref{eq:TM_basic_short} leads to the Fourier–modal matrix
\begin{equation}\label{eq:TM_modal}
  \llbracket\epsilon\rrbracket
  \big(I - K\,\llbracket\epsilon^{-1}\rrbracket\,K\big)\,[H_y]
  = \eta^{\,2}\,[H_y].
\end{equation}
We refer to this formulation as the \emph{first truncation scheme}.

Classical Toeplitz asymptotics yield the approximation $\llbracket\epsilon^{-1}\rrbracket \approx \llbracket\epsilon\rrbracket^{-1}$ for large truncation orders~\cite{Gray2006Book}, producing the simpler eigenvalue problem---the \emph{second scheme}---which often yields improved numerical behavior~\cite{Moharam1995a, Garnet1996}:
\begin{equation}\label{eq:TM_modal_classical}
  \llbracket\epsilon\rrbracket
  \big(I - K\,\llbracket\epsilon\rrbracket^{-1} K\big)\,[H_y]
  = \eta^{\,2}\,[H_y].
\end{equation}
The factorization rules of~\cite{Li1996a} provide a more accurate representation of the TM operator, leading to the \emph{third scheme}
\begin{equation}\label{eq:TM_modal_factorized}
  \llbracket\epsilon^{-1}\rrbracket^{-1}
  \big(I - K\,\llbracket\epsilon\rrbracket^{-1} K\big)\,[H_y]
  = \eta^{\,2}\,[H_y].
\end{equation}
This will be our standard TM formulation unless otherwise noted.

\medskip
\subsection{Example: Lamellar Grating}\label{subsec:example}
We consider the TM benchmark configuration of \cite{Popov2004a}, illustrated in Fig.~\ref{fig:setting}. The unit cell consists of an air region with $\epsilon_1 = 1$ and a lossless metallic region with $\epsilon_2 = -100$, with groove width~$g$; in our examples we take $g = \ell/2$. The Bloch phase is specified by $k_{x,0}/k_0 = \sin(\theta)$, and we set $\theta = 30^\circ$. We choose $\lambda_{\text{vac}} = 632.8\,\mathrm{nm}$ and $\ell = 500\,\mathrm{nm}$, and compute the eigenvalues $\eta^{\,2}$. These eigenvalues depend parametrically on $(\epsilon_1,\epsilon_2)$, the ratios $g/\ell$ and $\ell/\lambda_{\text{vac}}$, and the incidence angle~$\theta$. For comparison, we also consider a hypothetical dielectric grating of the same geometry, with $\epsilon_1 = 1$ and $\epsilon_2 = 100$, so that the contrast is of comparable magnitude.

Although the full diffraction problem proceeds by matching modal expansions across layers using the stable formulations of \cite{Moharam1995b, Li1996b, Li2003a}, our purpose here is only to examine the eigenvalue structure associated with \eqref{eq:TM_modal_classical} and \eqref{eq:TM_modal_factorized}. Moreover, since the phenomenon of interest manifests most severely in lossless configurations, we restrict attention to real-valued $\epsilon_2$; adding a small imaginary part does not change the qualitative behavior relevant to our discussion.

\begin{figure}[tbp]
    \centering
    \begin{subfigure}{0.55\textwidth}
        \centering
        \includegraphics[width=\textwidth]{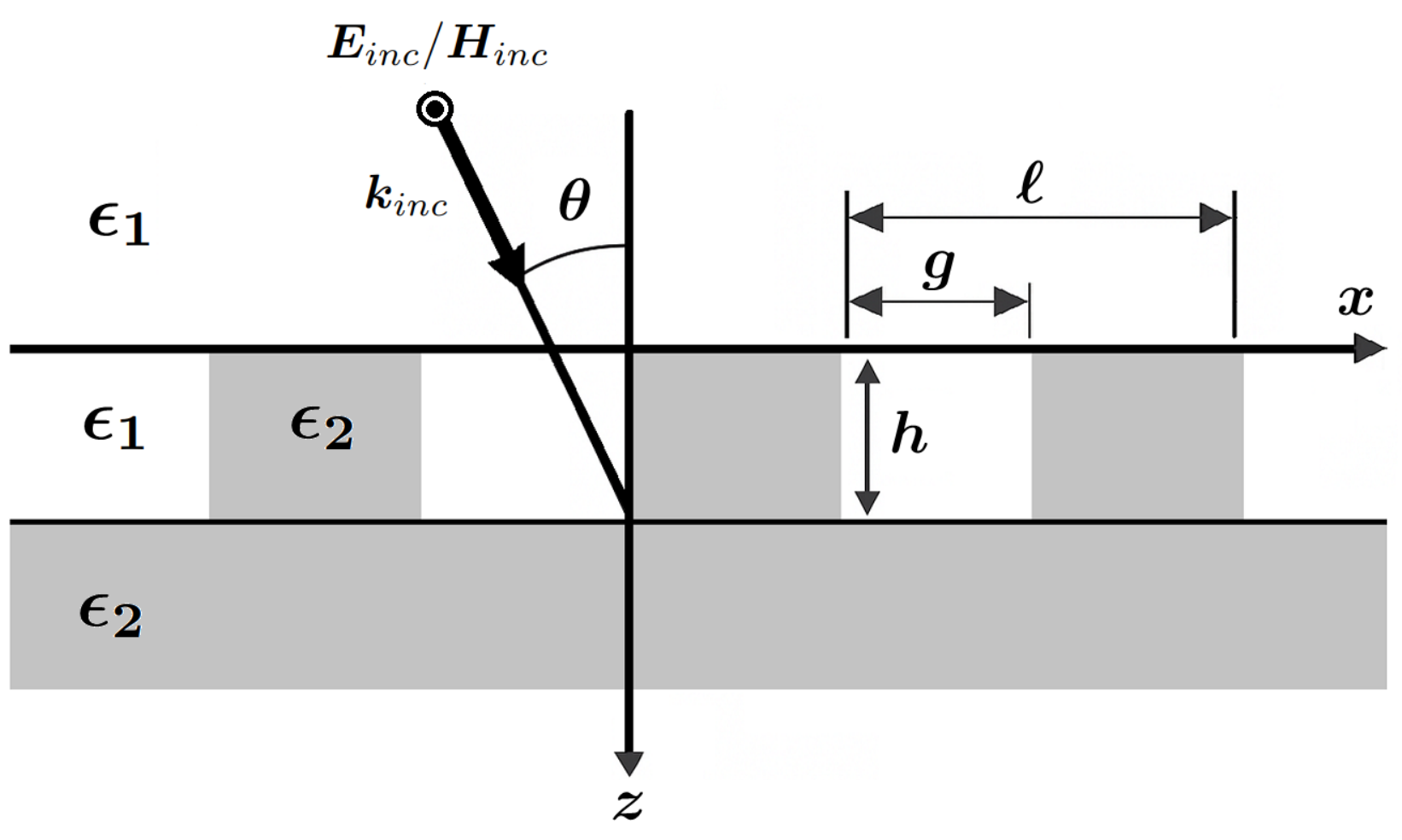}
        \caption{}\label{fig:setting}
    \end{subfigure}
    \hfill
    \begin{subfigure}{0.42\textwidth}
        \centering
        \includegraphics[width=\textwidth]{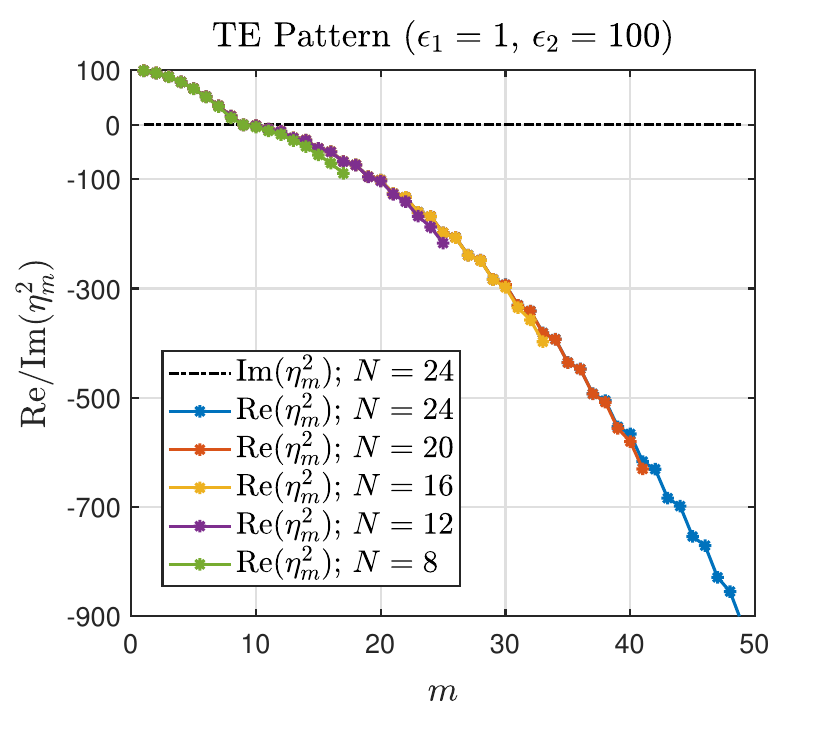}
        \caption{}\label{fig:patt-N}
    \end{subfigure}

    \caption{ 
    (a) Geometry of a lamellar grating problem under TE/TM incidence.  
    (b) Development of the TE eigenvalue pattern in a dielectric grating for $N=8\sim24$.}
\end{figure}

Following \cite{Faghihifar2019, Faghihifar2019conf}, it is useful for assessment or comparison to sort the eigenvalues according to $\operatorname{Re}(\eta^{2})$ in descending order and then, in this ordering, to plot their real and corresponding imaginary parts. For geometrically simple and regular configurations, the plot of the real parts exhibits a stable pattern with no finite accumulation points, which settles from the left as $N \to \infty$, while the imaginary parts either vanish or remain negligible. Fig.~\ref{fig:patt-N} illustrates the formation of this pattern as $N$ increases, for the lamellar dielectric grating described earlier under TE incidence.

Eigenvalues with $\operatorname{Re}(\eta^{2}) > 0$ correspond to propagating modes and dominate both the physical behaviour and the numerical approximation, as they carry energy. Those with $\operatorname{Re}(\eta^{2}) < 0$ are evanescent modes. These eigenvalues extend unboundedly toward $-\infty$ and are therefore infinite in number. The modes occupying the far-right tail of each truncated pattern are most sensitive to truncation errors; however, they play little role in the resulting diffraction solution.

For discrete dielectric gratings, the eigenvalue pattern consists of several interconnected curve-like branches, each originating near one of the grating permittivity values. Consequently, in the dielectric case the eigenvalues are bounded above by $\epsilon_{\max}$, ensuring that only finitely many propagating modes exist. However, metallic gratings under TM incidence can support genuine modes that violate this upper bound \cite{Tishchenko2005, Foresti2006}. Moreover, spurious modes appear precisely in this physically significant region and therefore cannot be distinguished \emph{a priori}. To facilitate comparison across truncation orders, we index these outlying modes as $m = 0, -1, -2, \dots$ so that the ordering of the pattern remains consistent as $N$ varies.

\begin{figure}[tbp]
    \centering
    \begin{subfigure}{0.49\textwidth}
        \includegraphics[width=\textwidth]{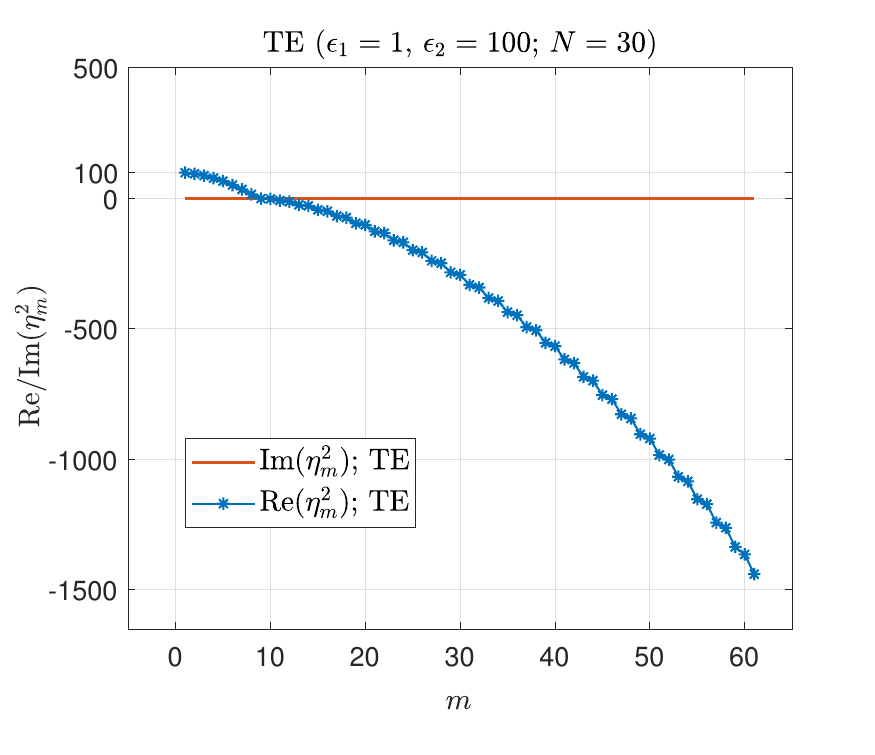}
        \caption{}
        \label{fig:4g-te-d}
    \end{subfigure}
    \hfill
    \begin{subfigure}{0.49\textwidth}
        \includegraphics[width=\textwidth]{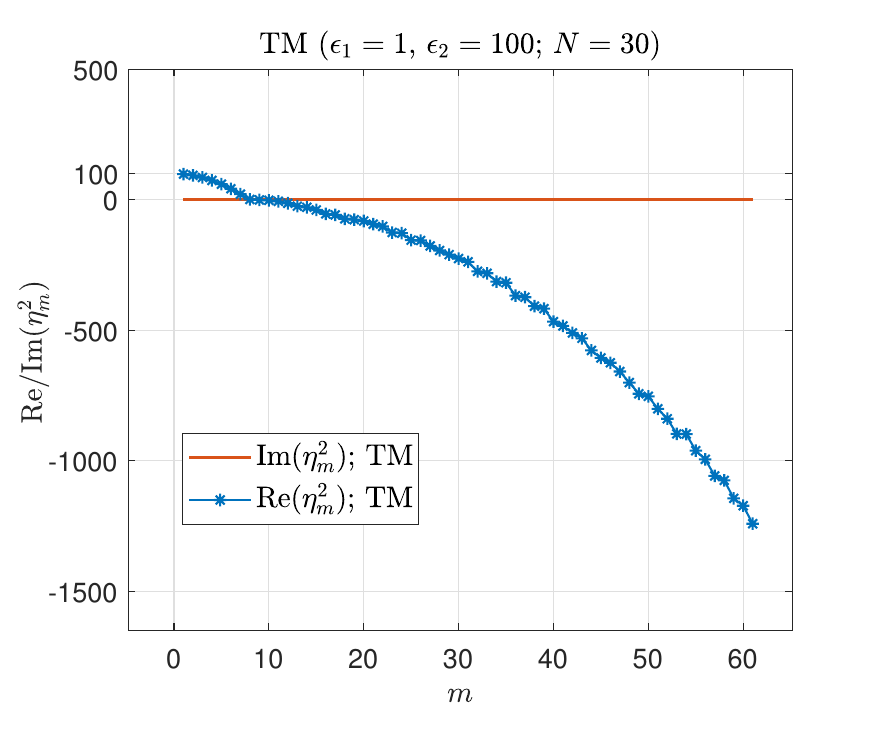}
        \caption{}
        \label{fig:4g-tm-d}
    \end{subfigure}
    \begin{subfigure}{0.49\textwidth}
        \includegraphics[width=\textwidth]{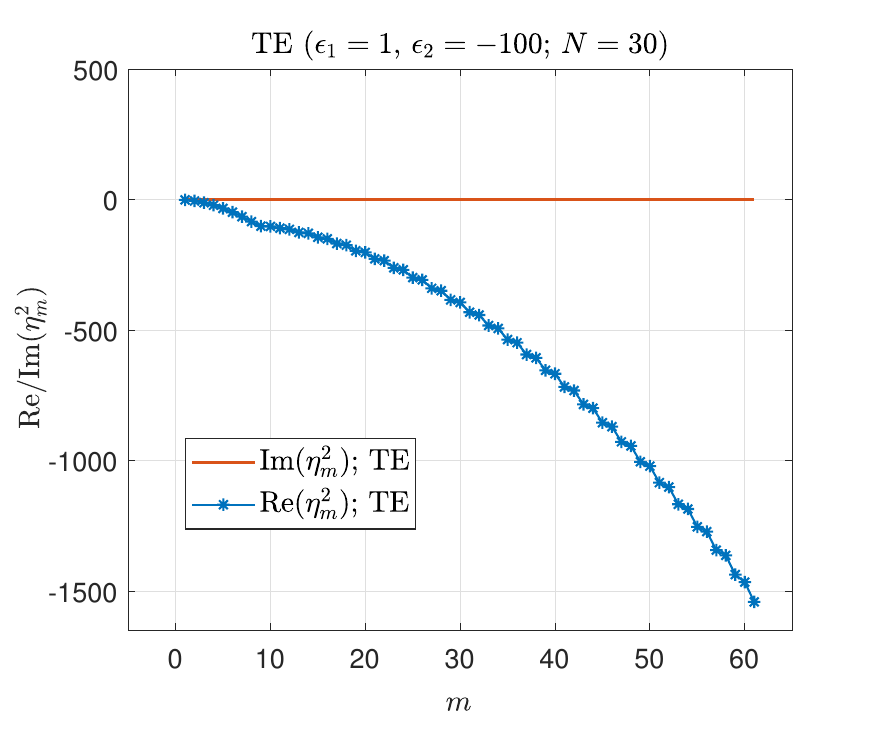}
        \caption{}
        \label{fig:4g-te-m}
    \end{subfigure}
    \hfill
    \begin{subfigure}{0.49\textwidth}
        \includegraphics[width=\textwidth]{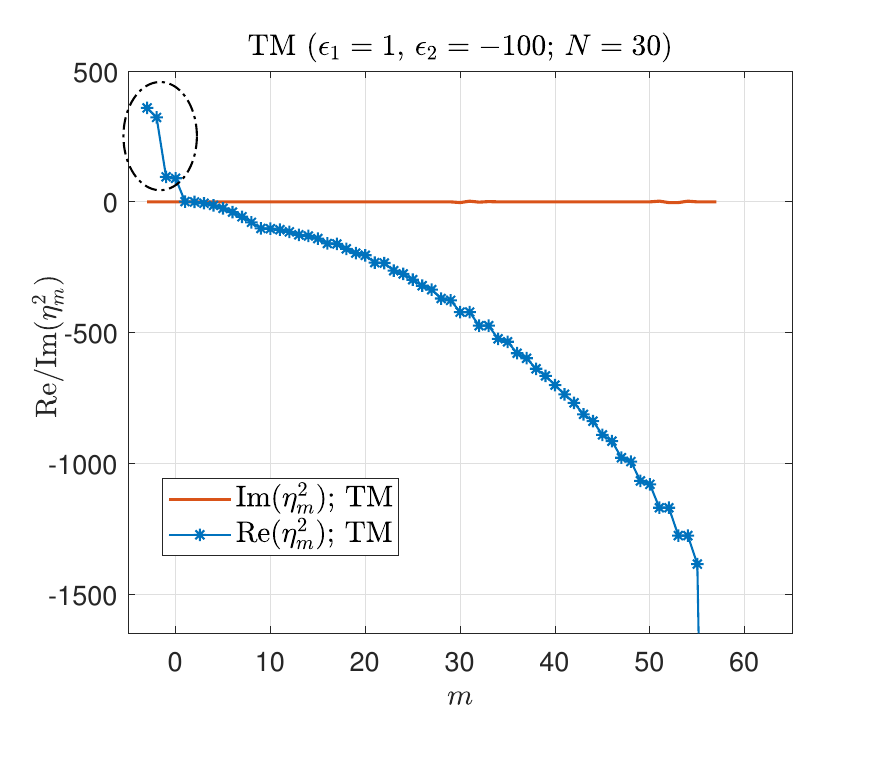}
        \caption{}
        \label{fig:4g-tm-m}
    \end{subfigure}

    \caption{Eigenvalue graphs for various lamellar grating settings:
    (a) TE/dielectric, (b) TM/dielectric, 
    (c) TE/metallic, and (d) TM/metallic.}
    \label{fig:4_graphs}
\end{figure}

Fig.~\ref{fig:4_graphs} displays the eigenvalue graphs for the four cases at truncation order $N=30$. Among these, only the metallic TM problem deviates from the expected pattern, as seen in Fig.~\ref{fig:4g-tm-m}, where spurious eigenvalues appear. This confirms that the emergence of spurious modes does not arise from the mere presence of a high dielectric contrast. Another distinctive feature occurs in the imaginary parts: for the first three cases they vanish identically, whereas nonzero values may occur in the metallic TM case \cite{Foresti2006}. However, this effect is not clearly visible in the plots and is not the focus of our study.

Unlike the permittivity contrast, the emergence of outlying modes depends sensitively on the chosen truncation scheme. Fig.~\ref{fig:scm} examines how the three schemes introduced earlier affect the behaviour of spurious modes. In Fig.~\ref{fig:scm-m}, we plot the magnitude $|\eta_m|^2$---essentially $\operatorname{Re}(\eta_m^{\,2})$---on a logarithmic scale for $N=30$ under each scheme. Physical modes should agree across all schemes as $N$ increases; for example, the mode indexed by $m=1$ appears consistently as the dominant physical mode.

The first truncation scheme produces the largest number of---presumably---spurious modes, exhibiting extremely large eigenvalues. The second scheme produces only one outlier, but with an exceptionally large value. The third scheme, the modern standard, still produces four spurious eigenvalues, smaller than those of the other schemes but still very large compared with the physical $m=1$ eigenvalue. To trace their progression, Fig.~\ref{fig:scm-N} shows the evolution of these modes for $N = 16 \sim 40$, with each scheme represented by a distinct colour. The qualitative conclusions drawn at $N=30$ persist throughout this range.

\begin{figure}[tbp]
    \centering
    \begin{subfigure}{0.47\textwidth}
        \centering
        \includegraphics[width=\textwidth]{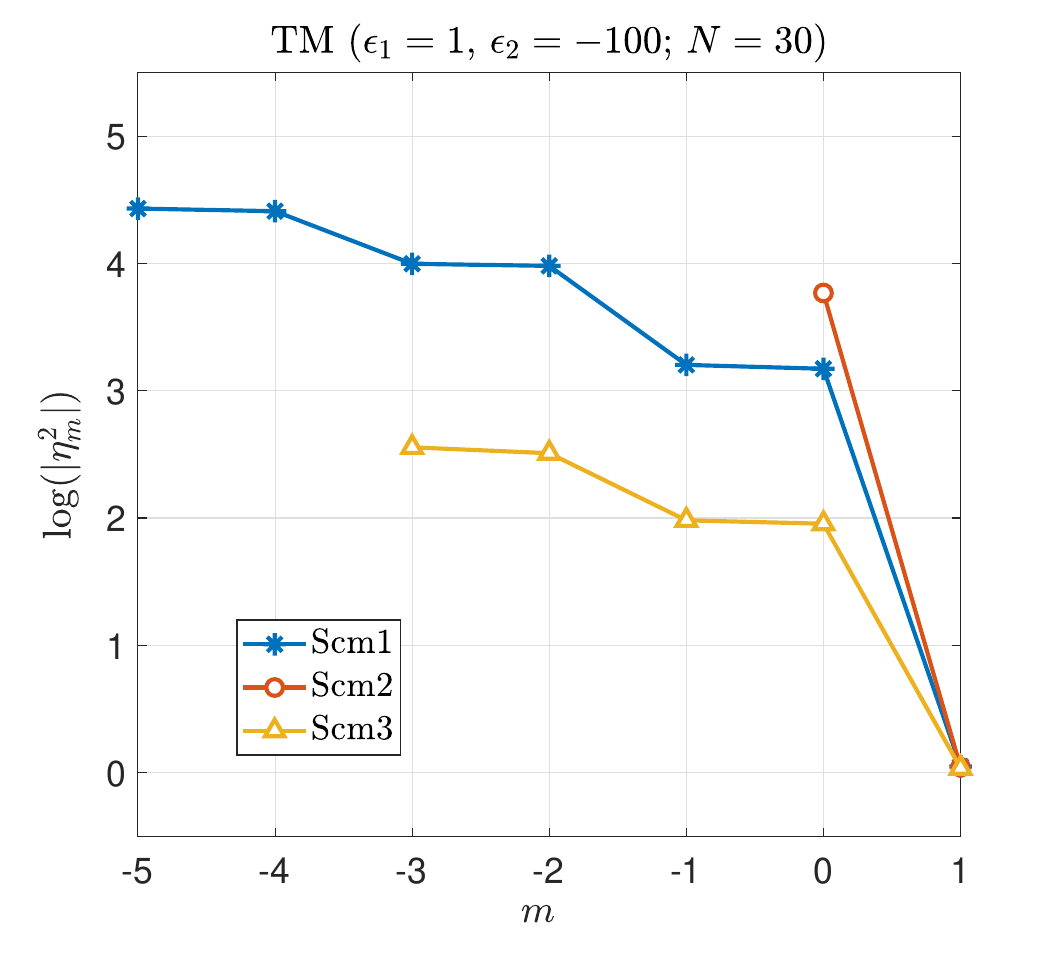}
        \caption{}\label{fig:scm-m}
    \end{subfigure}
    \hfill
    \begin{subfigure}{0.47\textwidth}
        \centering
        \includegraphics[width=\textwidth]{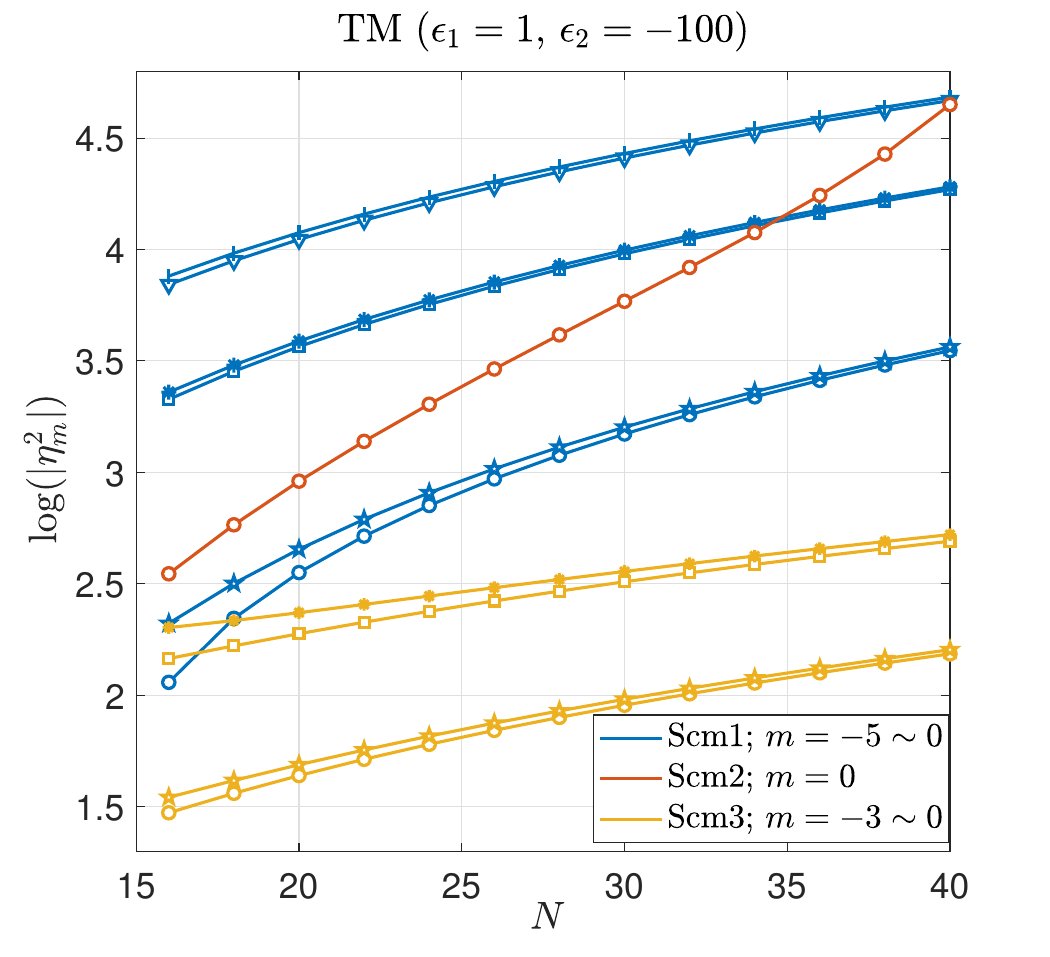}
        \caption{}\label{fig:scm-N}
    \end{subfigure}

    \caption{Comparison of spurious eigenvalue magnitudes across truncation schemes.  
    (a) Spurious eigenvalues at $N=30$.  
    (b) Evolution of their magnitudes for $N=16\sim40$.}
    \label{fig:scm}
\end{figure}

\begin{figure}[tbp]
    \centering
    \begin{subfigure}{0.49\textwidth}
        \centering
        \includegraphics[width=\textwidth]{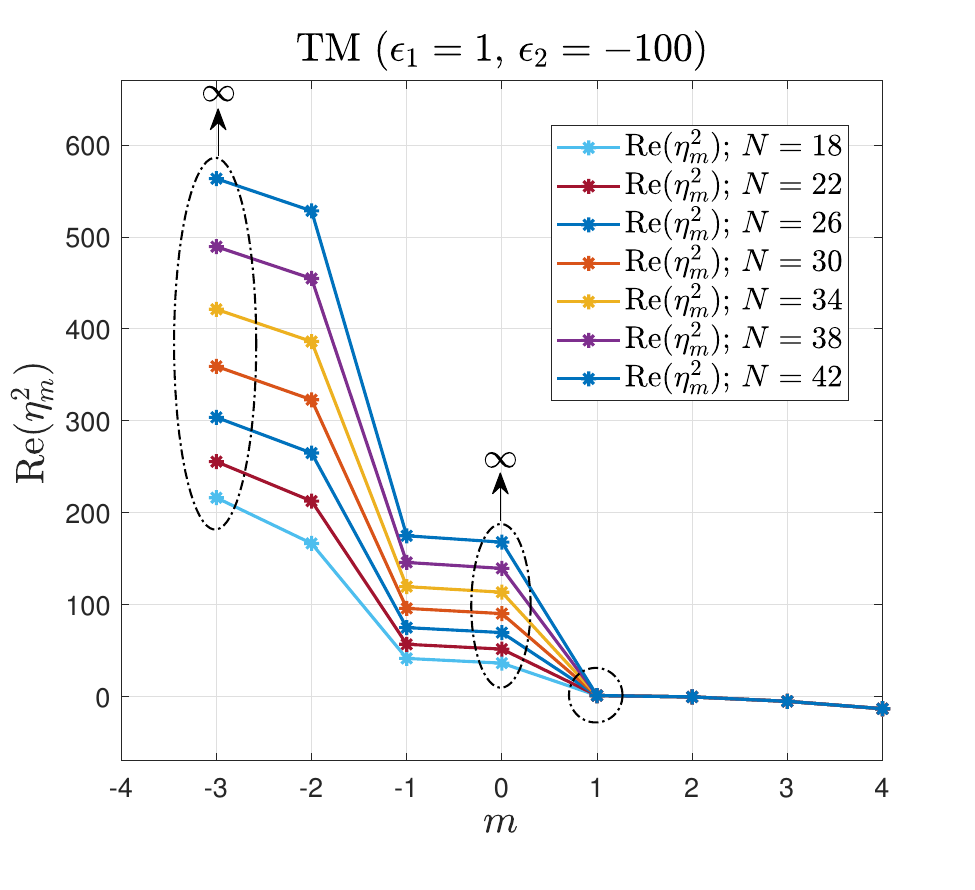}
        \caption{}\label{fig:sp-m}
    \end{subfigure}
    \hfill
    \begin{subfigure}{0.49\textwidth}
        \centering
        \includegraphics[width=\textwidth]{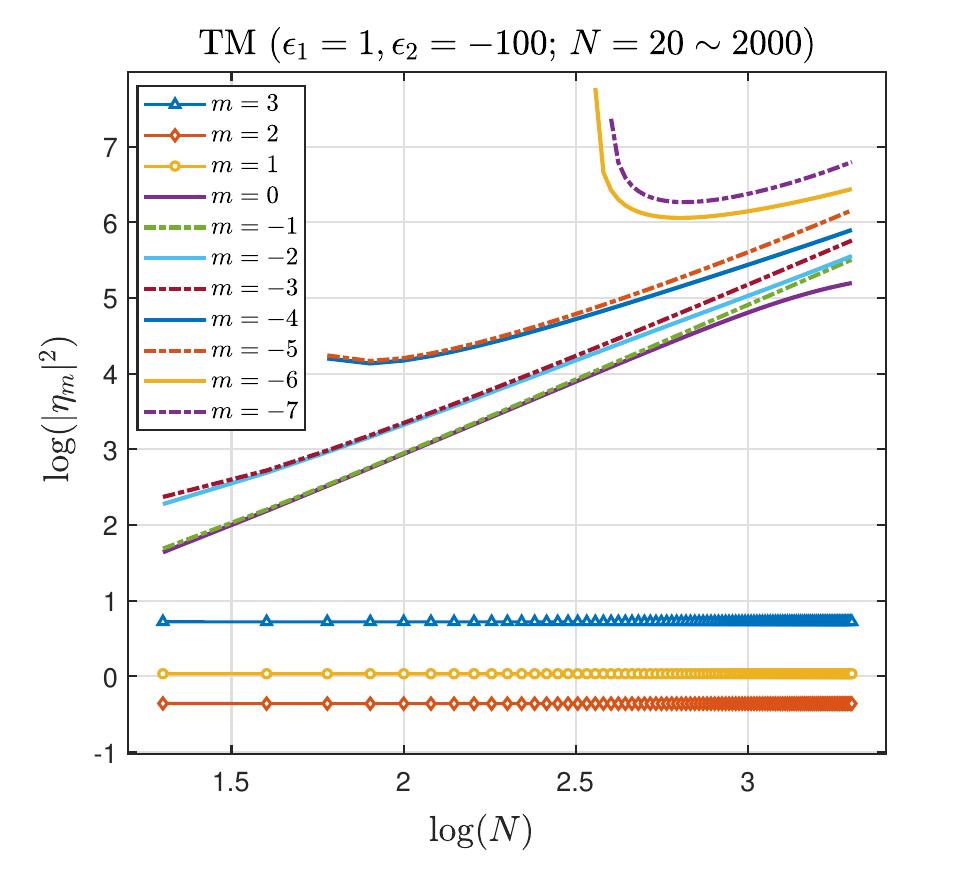}
        \caption{}\label{fig:sp-N}
    \end{subfigure}

    \caption{Growth of spurious modes with truncation order:  
    (a) zoom on the spurious region for $N=18\sim 42$;  
    (b) log--log scale evolution of selected physical and spurious modes for $N=20\sim2000$.}
    \label{fig:sp}
\end{figure}

Another observation from Fig.~\ref{fig:scm-m}, and the most important one of this section, concerns the \emph{growth} of spurious modes with the truncation order. Focusing on the standard truncation scheme, Fig.~\ref{fig:sp-m} zooms in on the spurious region of the eigenvalue pattern. While the physical part of the pattern converges as $N$ increases from $18$ to $42$, the spurious modes appear to grow without bound.

To examine this over a wider range, Fig.~\ref{fig:sp-N} plots the magnitudes of the non-physical eigenvalues, together with three physical modes ($m=1,2,3$), for $N = 20 \sim 2000$ on a log--log scale. As $N$ crosses certain thresholds, new spurious eigenvalues emerge with increasingly large magnitudes, suggesting that the number of spurious modes is plausibly infinite. Moreover, although the growth of individual spurious eigenvalues may not persist uniformly for all $N$, the clearly unbounded divergence observed in practice is consistent with the behaviour expected when the coefficients change sign. The log--log scaling is consistent with polynomial growth in $N$, though we do not attempt to determine a rate.

This empirical behavior, together with our analytical left half-plane boundedness result, yields a simple and robust rule of thumb for identifying spurious modes in the metallic TM lamellar-grating problem. Since $\eta^{2} = -\lambda/k_0^{2}$ in the SL formulation, boundedness in the left half-plane implies the existence of $R_0>0$ such that every eigenvalue with $\operatorname{Re}(\eta^{2})>0$ satisfies $|\eta^{2}|<R_0/k_0^{2}$. Consequently, any computed mode whose $\operatorname{Re}(\eta^{2})$ grows without bound as $N$ increases cannot converge to an eigenvalue of \eqref{eq:SL_form}, and is therefore non-physical. This is the criterion announced in Section~\ref{sec:intro}: divergence in $N$ is sufficient to certify a mode as spurious, independently of the truncation scheme or the observed growth rate.

\section{Conclusion}
\label{sec:conclusion}
We establish a left half-plane boundedness result for the eigenvalues of a class of SL problems in which the principal coefficient and the weight are coupled through a positive ratio. TM diffraction in lamellar gratings provides a physically important instance of this framework, corresponding to the case $w=p$. The result implies that only finitely many eigenvalues may lie in the open left half-plane, and the exclusion region admits an explicit bound in terms of the coefficients and the interface data. This yields a criterion for identifying spurious modes in numerical eigenvalue solvers, particularly in Fourier modal methods, and the accompanying numerical examples illustrate its practical applicability.

\section*{Acknowledgment}
ChatGPT (OpenAI) and Claude (Anthropic) were used for language polishing, stylistic refinement, and assistance with the clarity and presentation of the mathematical exposition by identifying redundancies and clarifying definitions and assumptions. The mathematical results, proofs, numerical implementation, and conclusions are the author's own.

\vskip6pc

\bibliographystyle{siamplain}

\begin{thebibliography}{10}

\bibitem{Abasheeva1997}
{\sc N.~Abasheeva and S.~Pyatkov}, {\em Counterexamples in indefinite
  sturm-liouville problems}, Siberian Advances in Mathematics, 7 (1997),
  pp.~1--8.

\bibitem{Popov2014Book}
{\sc T.~Antonakakis, F.~I. Baida, A.~Belkhir, K.~Cherednichenko, S.~Cooper,
  R.~Craster, G.~Dem{\'e}sy, J.~Desanto, G.~Granet, B.~Gralak, et~al.}, {\em
  Gratings: Theory and numeric applications}, AMU,(PUP), CNRS, ECM, 2014.

\bibitem{Beals1985}
{\sc R.~Beals}, {\em Indefinite sturm-liouville problems and half-range
  completeness}, Journal of Differential Equations, 56 (1985), pp.~391--407.

\bibitem{Behrndt2013a}
{\sc J.~Behrndt, S.~Chen, F.~Philipp, and J.~Qi}, {\em Bounds on non-real
  eigenvalues of indefinite sturm-liouville problems}, PAMM, 13 (2013),
  pp.~525--526.

\bibitem{Behrndt2013b}
{\sc J.~Behrndt, S.~Chen, F.~Philipp, and J.~Qi}, {\em Estimates on the
  non-real eigenvalues of regular indefinite sturm--liouville problems},
  Proceedings of the Royal Society of Edinburgh Section A: Mathematics, 144
  (2014), pp.~1113--1126.

\bibitem{Behrndt2013c}
{\sc J.~Behrndt, F.~Philipp, and C.~Trunk}, {\em Bounds on the non-real
  spectrum of differential operators with indefinite weights}, Mathematische
  Annalen, 357 (2013), pp.~185--213.

\bibitem{Behrndt2018}
{\sc J.~Behrndt, P.~Schmitz, and C.~Trunk}, {\em Spectral bounds for singular
  indefinite sturm-liouville operators with $l^1$--potentials}, Proceedings of
  the American Mathematical Society, 146 (2018), pp.~3935--3942.

\bibitem{Binding2004}
{\sc P.~Binding and B.~{\'C}urgus}, {\em A counterexample in sturm--liouville
  completeness theory}, Proceedings of the Royal Society of Edinburgh Section
  A: Mathematics, 134 (2004), pp.~241--248.

\bibitem{Birkhoff1989Book}
{\sc G.~Birkhoff and G.~Rota}, {\em Ordinary Differential Equations},
  Introductions to higher mathematics, Wiley, 1989.

\bibitem{Bonod2015}
{\sc N.~Bonod and J.~Neauport}, {\em Diffraction gratings: from principles to
  applications in high-intensity lasers}, 2016.

\bibitem{Botten1981a}
{\sc I.~Botten, M.~Craig, R.~McPhedran, J.~Adams, and J.~Andrewartha}, {\em The
  dielectric lamellar diffraction grating}, Optica Acta: International Journal
  of Optics, 28 (1981), pp.~413--428.

\bibitem{Botten1981c}
{\sc L.~Botten, M.~Craig, and R.~McPhedran}, {\em Highly conducting lamellar
  diffraction gratings}, Optica Acta: International Journal of Optics, 28
  (1981), pp.~1103--1106.

\bibitem{Botten1981b}
{\sc L.~Botten, M.~Craig, R.~McPhedran, J.~Adams, and J.~Andrewartha}, {\em The
  finitely conducting lamellar diffraction grating}, Optica Acta: International
  Journal of Optics, 28 (1981), pp.~1087--1102.

\bibitem{Botten1985}
{\sc L.~Botten and R.~McPhedran}, {\em Completeness and modal expansion methods
  in diffraction theory}, Optica Acta: International Journal of Optics, 32
  (1985), pp.~1479--1488.

\bibitem{Burckhardt1966}
{\sc C.~Burckhardt}, {\em Diffraction of a plane wave at a sinusoidally
  stratified dielectric grating}, Journal of the Optical Society of America, 56
  (1966), pp.~1502--1508.

\bibitem{Chicone2006Book}
{\sc C.~Chicone}, {\em Ordinary differential equations with applications},
  Springer, 2006.

\bibitem{Chiou2009}
{\sc Y.-P. Chiou, W.-L. Yeh, and N.-Y. Shih}, {\em Analysis of highly
  conducting lamellar gratings with multidomain pseudospectral method}, 2009.

\bibitem{Curgus2013}
{\sc B.~{\'C}urgus, A.~Fleige, and A.~Kostenko}, {\em The riesz basis property
  of an indefinite sturm--liouville problem with non-separated boundary
  conditions}, Integral Equations and Operator Theory, 77 (2013), pp.~533--557.

\bibitem{Curgus1989}
{\sc B.~Curgus and H.~Langer}, {\em A krein space approach to symmetric
  ordinary differential operators with an indefinite weight function}, J.
  Differential Equations, 79 (1989), pp.~31--61.

\bibitem{Dudley2015Book}
{\sc D.~Dudley, I.~Antennas, and P.~Society}, {\em Mathematical Foundations for
  Electromagnetic Theory}, IEEE Electromagnetic waves series, IEEE Press, 1994.

\bibitem{Edee2011}
{\sc K.~Edee, I.~Fenniche, G.~Granet, and B.~Guizal}, {\em Modal method based
  on subsectional gegen-bauer polynomial expansion for lamellar gratings:
  Weighting function, convergence and stability}, Progress In Electromagnetics
  Research, 133 (2013), pp.~17--35.

\bibitem{Edee2015}
{\sc K.~Edee and J.~Plumey}, {\em Numerical scheme for the modal method based
  on subsectional gegenbauer polynomial expansion: application to biperiodic
  binary grating}, Journal of the Optical Society of America A, 32 (2015),
  pp.~402--410.

\bibitem{Everitt1982}
{\sc W.~N. Everitt}, {\em On the transformation theory of ordinary second-order
  linear symmetric differential expressions}, Czechoslovak Mathematical
  Journal, 32 (1982), pp.~275--306.

\bibitem{Faghihifar2022}
{\sc E.~Faghihifar and M.~Akbari}, {\em Exclusive robustness of gegenbauer
  method to truncated convolution errors}, Journal of Computational Physics,
  452 (2022), p.~110911.

\bibitem{Faghihifar2019conf}
{\sc E.~Faghihifar, M.~Akbari, and S.~A.~H. Nekuee}, {\em Fast estimation of
  propagation constants in lamellar gratings needless of solving the eigenvalue
  equation}, in 2019 27th Iranian Conference on Electrical Engineering (ICEE),
  IEEE, 2019, pp.~1342--1346.

\bibitem{Faghihifar2019}
{\sc E.~Faghihifar, M.~Akbari, and S.~A.~H. Nekuee}, {\em Fast estimation of
  propagation constants in crossed gratings}, Journal of Optics, 22 (2020),
  p.~025001.

\bibitem{Fleige1998}
{\sc A.~Fleige}, {\em A counterexample to completeness properties for
  indefinite st urm—liouville problems}, Mathematische Nachrichten, 190
  (1998), pp.~123--128.

\bibitem{Foresti2006}
{\sc M.~Foresti, L.~Menez, and A.~V. Tishchenko}, {\em Modal method in deep
  metal--dielectric gratings: the decisive role of hidden modes}, Journal of
  the Optical Society of America A, 23 (2006), pp.~2501--2509.

\bibitem{Garnet1999}
{\sc G.~Granet}, {\em Reformulation of the lamellar grating problem through the
  concept of adaptive spatial resolution}, Journal of the Optical Society of
  America A, 16 (1999), pp.~2510--2516.

\bibitem{Garnet2014}
{\sc G.~Granet}, {\em Efficient implementation of b-spline modal method for
  lamellar gratings}, 2014.

\bibitem{Garnet2010}
{\sc G.~Granet, L.~B. Andriamanampisoa, K.~Raniriharinosy, A.~M. Armeanu, and
  K.~Edee}, {\em Modal analysis of lamellar gratings using the moment method
  with subsectional basis and adaptive spatial resolution}, Journal of the
  Optical Society of America A, 27 (2010), pp.~1303--1310.

\bibitem{Garnet1996}
{\sc G.~Granet and B.~Guizal}, {\em Efficient implementation of the
  coupled-wave method for metallic lamellar gratings in tm polarization},
  Journal of the Optical Society of America A, 13 (1996), pp.~1019--1023.

\bibitem{Gray2006Book}
{\sc R.~M. Gray et~al.}, {\em Toeplitz and circulant matrices: A review},
  Foundations and Trends{\textregistered} in Communications and Information
  Theory, 2 (2006), pp.~155--239.

\bibitem{Guizal2009}
{\sc B.~Guizal, H.~Yala, and D.~Felbacq}, {\em Reformulation of the eigenvalue
  problem in the fourier modal method with spatial adaptive resolution}, Optics
  letters, 34 (2009), pp.~2790--2792.

\bibitem{Gundu2010b}
{\sc K.~M. Gundu and A.~Mafi}, {\em Constrained least squares fourier modal
  method for computing scattering from metallic binary gratings}, Journal of
  the Optical Society of America A, 27 (2010), pp.~2375--2380.

\bibitem{Gundu2010a}
{\sc K.~M. Gundu and A.~Mafi}, {\em Reliable computation of scattering from
  metallic binary gratings using fourier-based modal methods}, Journal of the
  Optical Society of America A, 27 (2010), pp.~1694--1700.

\bibitem{Hanson2013Book}
{\sc G.~Hanson and A.~Yakovlev}, {\em Operator Theory for Electromagnetics: An
  Introduction}, Springer New York, 2013.

\bibitem{Haupt1914}
{\sc O.~Haupt}, {\em {\"U}ber eine methode zum beweise von
  oszillationstheoremen}, Mathematische Annalen, 76 (1914), pp.~67--104.

\bibitem{Hilb1911}
{\sc E.~Hilb}, {\em {\"U}ber reihenentwicklungen nach den eigenfunktionen
  linearer differentialgleichungen 2ter ordnung}, Mathematische Annalen, 71
  (1911), pp.~76--87.

\bibitem{Joannopoulos2008Book}
{\sc J.~D. Joannopoulos, P.~R. Villeneuve, and S.~Fan}, {\em Photonic
  crystals}, vol.~102, Elsevier, 1997.

\bibitem{Kaper1984}
{\sc H.~G. Kaper, M.~K. Kwong, C.~Lekkerkerker, and A.~Zettl}, {\em Full-and
  partial-range eigenfunction expansions for sturm-liouville problems with
  indefinite weights}, Proceedings of the Royal Society of Edinburgh Section A:
  Mathematics, 98 (1984), pp.~69--88.

\bibitem{Kasper1973}
{\sc F.~G. Kaspar}, {\em Diffraction by thick, periodically stratified gratings
  with complex dielectric constant}, J. Opt. Soc. Am., 63 (1973), pp.~37--45.

\bibitem{Khavasi2009}
{\sc A.~Khavasi and K.~Mehrany}, {\em Artifact-free analysis of highly
  conducting binary gratings by using the legendre polynomial expansion
  method}, Journal of the Optical Society of America A, 26 (2009),
  pp.~1467--1471.

\bibitem{Khavasi2008}
{\sc A.~Khavasi, K.~Mehrany, and A.~M. Jazayeri}, {\em Study of the numerical
  artifacts in differential analysis of highly conducting gratings}, Optics
  letters, 33 (2008), pp.~159--161.

\bibitem{Khavasi2007}
{\sc A.~Khavasi, K.~Mehrany, and B.~Rashidian}, {\em Three-dimensional
  diffraction analysis of gratings based on legendre expansion of
  electromagnetic fields}, Journal of the Optical Society of America B, 24
  (2007), pp.~2676--2685.

\bibitem{Kikonko2016}
{\sc M.~Kikonko and A.~B. Mingarelli}, {\em Bounds on real and imaginary parts
  of non-real eigenvalues of a non-definite sturm--liouville problem}, Journal
  of Differential Equations, 261 (2016), pp.~6221--6232.

\bibitem{Kim2012}
{\sc H.~Kim, G.-W. Park, and C.-S. Kim}, {\em Investigation of the convergence
  behavior with fluctuation features in the fourier modal analysis of a
  metallic grating}, Journal of the Optical Society of Korea, 16 (2012),
  pp.~196--202.

\bibitem{Knop1978}
{\sc K.~Knop}, {\em Rigorous diffraction theory for transmission phase gratings
  with deep rectangular grooves}, Journal of the Optical Society of America, 68
  (1978), pp.~1206--1210.

\bibitem{Kocabas2009}
{\sc {\c{S}}.~E. Kocaba{\c{s}}, G.~Veronis, D.~A. Miller, and S.~Fan}, {\em
  Modal analysis and coupling in metal-insulator-metal waveguides}, Physical
  Review B—Condensed Matter and Materials Physics, 79 (2009), p.~035120.

\bibitem{Kogelnik1969}
{\sc H.~Kogelnik}, {\em Coupled wave theory for thick hologram gratings}, Bell
  System Technical Journal, 48 (1969), pp.~2909--2947.

\bibitem{Kong2003}
{\sc Q.~Kong, H.~Wu, A.~Zettl, and M.~M{\"o}ller}, {\em Indefinite
  sturm--liouville problems}, Proceedings of the Royal Society of Edinburgh
  Section A: Mathematics, 133 (2003), pp.~639--652.

\bibitem{Lalanne2000}
{\sc P.~Lalanne and J.-P. Hugonin}, {\em Numerical performance of
  finite-difference modal methods for the electromagnetic analysis of
  one-dimensional lamellar gratings}, Journal of the Optical Society of America
  A, 17 (2000), pp.~1033--1042.

\bibitem{Lalanne1996}
{\sc P.~Lalanne and G.~M. Morris}, {\em Highly improved convergence of the
  coupled-wave method for tm polarization}, Journal of the Optical Society of
  America A, 13 (1996), pp.~779--784.

\bibitem{Li1993b}
{\sc L.~Li}, {\em A modal analysis of lamellar diffraction gratings in conical
  mountings}, Journal of Modern Optics, 40 (1993), pp.~553--573.

\bibitem{Li1996b}
{\sc L.~Li}, {\em Formulation and comparison of two recursive matrix algorithms
  for modeling layered diffraction gratings}, Journal of the Optical Society of
  America A, 13 (1996), pp.~1024--1035.

\bibitem{Li1996a}
{\sc L.~Li}, {\em Use of fourier series in the analysis of discontinuous
  periodic structures}, Journal of the Optical Society of America A, 13 (1996),
  pp.~1870--1876.

\bibitem{Li1997}
{\sc L.~Li}, {\em New formulation of the fourier modal method for crossed
  surface-relief gratings}, J. Opt. Soc. Am. A, 14 (1997), pp.~2758--2767.

\bibitem{Li1999}
{\sc L.~Li}, {\em Justification of matrix truncation in the modal methods of
  diffraction gratings}, Journal of Optics A: Pure and Applied Optics, 1
  (1999), p.~531.

\bibitem{Li2001Chap}
{\sc L.~Li}, {\em Mathematical reflections on the fourier modal method in
  grating theory}, in Mathematical modeling in optical science, SIAM, 2001,
  pp.~111--139.

\bibitem{Li2003a}
{\sc L.~Li}, {\em Note on the s-matrix propagation algorithm}, Journal of the
  Optical Society of America A, 20 (2003), pp.~655--660.

\bibitem{Li2012}
{\sc L.~Li}, {\em Field singularities at lossless metal-dielectric
  arbitrary-angle edges and their ramifications to the numerical modeling of
  gratings.}, Journal of the Optical Society of America. A, Optics, image
  science, and vision, 29 (2012), pp.~593--604.

\bibitem{Li2011}
{\sc L.~Li and G.~Granet}, {\em Field singularities at lossless
  metal-dielectric right-angle edges and their ramifications to the numerical
  modeling of gratings}, Journal of the Optical Society of America A, 28
  (2011), pp.~738--746.

\bibitem{Li1993a}
{\sc L.~Li and C.~W. Haggans}, {\em Convergence of the coupled-wave method for
  metallic lamellar diffraction gratings}, Journal of the Optical Society of
  America A, 10 (1993), pp.~1184--1189.

\bibitem{Lyndin2007}
{\sc N.~M. Lyndin, O.~Parriaux, and A.~V. Tishchenko}, {\em Modal analysis and
  suppression of the fourier modal method instabilities in highly conductive
  gratings}, Journal of the Optical Society of America A, 24 (2007),
  pp.~3781--3788.

\bibitem{Magnusson1978}
{\sc R.~Magnusson and T.~Gaylord}, {\em Equivalence of multiwave coupled-wave
  theory and modal theory for periodic-media diffraction}, Journal of the
  Optical Society of America, 68 (1978), pp.~1777--1779.

\bibitem{Mei2014}
{\sc Y.~Mei, H.~Liu, and Y.~Zhong}, {\em Treatment of nonconvergence of fourier
  modal method arising from irregular field singularities at lossless
  metal-dielectric right-angle edges}, Journal of the Optical Society of
  America A, 31 (2014), pp.~900--906.

\bibitem{Mingarelli1982}
{\sc A.~B. Mingarelli}, {\em Indefinite sturm-liouville problems}, in Ordinary
  and Partial Differential Equations: Proceedings of the Seventh Conference
  Held at Dundee, Scotland, March 29--April 2, 1982, Springer, 2006,
  pp.~519--528.

\bibitem{Mingarelli2011}
{\sc A.~B. Mingarelli}, {\em A survey of the regular weighted sturm-liouville
  problem-the non-definite case}, arXiv preprint arXiv:1106.6013,  (2011).

\bibitem{Moharam1982}
{\sc M.~Moharam and T.~K. Gaylord}, {\em Diffraction analysis of dielectric
  surface-relief gratings}, Journal of the Optical Society of America, 72
  (1982), pp.~1385--1392.

\bibitem{Moharam1983}
{\sc M.~Moharam and T.~K. Gaylord}, {\em Three-dimensional vector coupled-wave
  analysis of planar-grating diffraction}, Journal of the Optical Society of
  America, 73 (1983), pp.~1105--1112.

\bibitem{Moharam1986}
{\sc M.~Moharam and T.~K. Gaylord}, {\em Rigorous coupled-wave analysis of
  metallic surface-relief gratings}, Journal of the Optical Society of America
  A, 3 (1986), pp.~1780--1787.

\bibitem{Moharam1995b}
{\sc M.~Moharam, D.~A. Pommet, E.~B. Grann, and T.~K. Gaylord}, {\em Stable
  implementation of the rigorous coupled-wave analysis for surface-relief
  gratings: enhanced transmittance matrix approach}, Journal of the Optical
  Society of America A, 12 (1995), pp.~1077--1086.

\bibitem{Moharam1995a}
{\sc M.~G. Moharam, E.~B. Grann, D.~A. Pommet, and T.~K. Gaylord}, {\em
  Formulation for stable and efficient implementation of the rigorous
  coupled-wave analysis of binary gratings}, Journal of the Optical Society of
  America A, 12 (1995), pp.~1068--1076.

\bibitem{Morf1995}
{\sc R.~Morf}, {\em Exponentially convergent and numerically efficient solution
  of maxwell’s equations for lamellar gratings}, Journal of the Optical
  Society of America A, 12 (1995), pp.~1043--1056.

\bibitem{Moller1999}
{\sc M.~Möller}, {\em On the unboundedness below of the sturm—liouville
  operator}, Proceedings of the Royal Society of Edinburgh: Section A
  Mathematics, 129 (1999), p.~1011–1015.

\bibitem{Naimark1968Book}
{\sc M.~A. Naimark}, {\em Linear Differential Operators. Vol. II}, Ungar, 1968.
\newblock classic; verify publisher/edition.

\bibitem{Petit2013Book}
{\sc R.~Petit, L.~Botten, M.~Cadilhac, G.~Derrick, D.~Maystre, R.~McPhedran,
  M.~Neviere, and P.~Vincent}, {\em Electromagnetic Theory of Gratings}, Topics
  in Current Physics, Springer Berlin Heidelberg, 2013.

\bibitem{Popov2004a}
{\sc E.~Popov, B.~Chernov, M.~Nevi{\`e}re, and N.~Bonod}, {\em Differential
  theory: application to highly conducting gratings}, Journal of the Optical
  Society of America A, 21 (2004), pp.~199--206.

\bibitem{Pyatkov1992}
{\sc S.~G. Pyatkov}, {\em Certain properties of eigenfunctions of linear
  pencils}, Mathematical Notes, 51 (1992), pp.~90--95.

\bibitem{Qi2014}
{\sc J.~Qi and S.~Chen}, {\em A priori bounds and existence of non-real
  eigenvalues of indefinite sturm--liouville problems}, Journal of Spectral
  Theory, 4 (2014), pp.~53--63.

\bibitem{Qi2016}
{\sc J.~Qi, B.~Xie, and S.~Chen}, {\em The upper and lower bounds on non-real
  eigenvalues of indefinite sturm-liouville problems}, Proceedings of the
  American Mathematical Society, 144 (2016), pp.~547--559.

\bibitem{Randriamihaja2016}
{\sc M.~H. Randriamihaja, G.~Granet, K.~Edee, and K.~Raniriharinosy}, {\em
  Polynomial modal analysis of lamellar diffraction gratings in conical
  mounting}, 2016.

\bibitem{Richardson1918}
{\sc R.~G. Richardson}, {\em Contributions to the study of oscillation
  properties of the solutions of linear differential equations of the second
  order}, American Journal of Mathematics, 40 (1918), pp.~283--316.

\bibitem{Sheng1982}
{\sc P.~Sheng, R.~S. Stepleman, and P.~N. Sanda}, {\em Exact eigenfunctions for
  square-wave gratings: Application to diffraction and surface-plasmon
  calculations}, Phys. Rev. B, 26 (1982), pp.~2907--2916.

\bibitem{Snyder1983Book}
{\sc A.~W. Snyder and J.~D. Love}, {\em Optical Waveguide Theory}, Chapman and
  Hall, London, 1983.

\bibitem{Song2013}
{\sc D.~Song and Y.~Y. Lu}, {\em Pseudospectral modal method for conical
  diffraction of gratings}, Journal of Modern Optics, 60 (2013),
  pp.~1729--1734.

\bibitem{Song2011}
{\sc D.~Song, L.~Yuan, and Y.~Y. Lu}, {\em Fourier-matching pseudospectral
  modal method for diffraction gratings}, Journal of the Optical Society of
  America A, 28 (2011), pp.~613--620.

\bibitem{Sturman2007b}
{\sc B.~Sturman, E.~Podivilov, and M.~Gorkunov}, {\em Eigenmodes for
  metal-dielectric light-transmitting nanostructures}, Physical Review
  B—Condensed Matter and Materials Physics, 76 (2007), p.~125104.

\bibitem{Sturman2007a}
{\sc B.~Sturman, E.~Podivilov, and M.~Gorkunov}, {\em Eigenmodes for the
  problem of extraordinary light transmission through subwavelength holes},
  Europhysics Letters, 80 (2007), p.~24002.

\bibitem{Tishchenko2005}
{\sc A.~V. Tishchenko}, {\em Phenomenological representation of deep and high
  contrast lamellar gratings by means of the modal method}, Optical and Quantum
  Electronics, 27 (2005), pp.~309--330.

\bibitem{Volkmer1996}
{\sc H.~Volkmer}, {\em Sturm--liouville problems with indefinite weights and
  everitt's inequality}, Proceedings of the Royal Society of Edinburgh Section
  A: Mathematics, 126 (1996), pp.~1097--1112.

\bibitem{Watanabe2006}
{\sc K.~Watanabe}, {\em Study of the differential theory of lamellar gratings
  made of highly conducting materials}, J. Opt. Soc. Am. A, 23 (2006),
  pp.~69--72.

\bibitem{Weidmann1987Book}
{\sc J.~Weidmann}, {\em Spectral theory of ordinary differential operators},
  vol.~1258, Springer, 1987.

\bibitem{Wexler1967}
{\sc A.~Wexler}, {\em Solution of waveguide discontinuities by modal analysis},
  IEEE Transactions on Microwave Theory and Techniques, 15 (1967),
  pp.~508--517.

\bibitem{Xie2013}
{\sc B.~Xie and J.~Qi}, {\em Non-real eigenvalues of indefinite
  sturm--liouville problems}, Journal of Differential Equations, 255 (2013),
  pp.~2291--2301.

\bibitem{Xie2017}
{\sc B.~Xie, H.~Sun, and X.~Guo}, {\em Non-real eigenvalues of symmetric
  sturm--liouville problems with indefinite weight functions}, Electron. J.
  Qual. Theory Differ. Equ, 18 (2017), pp.~1--14.

\bibitem{Zettl2005Book}
{\sc A.~Zettl}, {\em Sturm-Liouville Theory}, Mathematical surveys and
  monographs, American Mathematical Society, 2005.

\end{thebibliography}

\end{document}